\documentclass[final,3p,times]{elsarticle}

%
%

\journal{arXiv}











\usepackage[utf8]{inputenc}

\usepackage{amssymb,amsmath,amsfonts,mathtools}
\usepackage{amsthm}
	\theoremstyle{remark}
	\newtheorem{remark}{Remark}[section]

	\theoremstyle{plain}
	\newtheorem{theorem}{Theorem}[section]
	
	\newtheorem{lemma}[theorem]{Lemma}
	\newtheorem{defin}[theorem]{Definition}

\usepackage{graphicx}
\usepackage{graphbox}
\usepackage{algorithm}
\usepackage{algpseudocode}
\usepackage{subcaption}
\captionsetup{size=small}
\captionsetup[sub]{size=small}
\usepackage{booktabs,cellspace}
\usepackage{multirow}
\usepackage{makecell}
\usepackage{ulem}

\usepackage{tikz}
\usepackage{pgfplots}
\usetikzlibrary{patterns}
\pgfplotsset{compat=newest}
\usetikzlibrary{calc}

\usepackage{natbib}
\biboptions{sort&compress}
\bibliographystyle{elsarticle-num}

\usepackage{lineno}
\usepackage[colorlinks=true, allcolors=blue]{hyperref}
\usepackage{hypernat}
\modulolinenumbers[5]
\usepackage[]{setspace}

\usepackage[colorinlistoftodos,prependcaption,textsize=scriptsize]{todonotes}

\setlength{\marginparwidth}{1.8cm}

\usepackage{listings}
\usepackage{color} 
\definecolor{mygreen}{RGB}{28,172,0} 
\definecolor{mylilas}{RGB}{170,55,241}

\renewcommand{\vec}[1]{\mathbf{#1}}
\newcommand{\mybold}[1]{\mathbf{#1}}

\newcommand{\tensorOne}[1]{\boldsymbol{#1}}
\newcommand{\tensorTwo}[1]{\boldsymbol{#1}}
\newcommand{\tensorFour}[1]{\mathbb{#1}}

\renewcommand{\Vec}[1]{\boldsymbol{\mathbf{#1}}}
\newcommand{\blkMat}[1]{\boldsymbol{\mathbf{#1}}}

\newcommand{\ran}[1]{\operatorname{Ran}( #1 )}



\begin{document}

\begin{frontmatter}

\title{Enhanced Relaxed Physical Factorization preconditioner for coupled poromechanics}

\author[label1]{Matteo Frigo}
\ead{matteo.frigo.3@phd.unipd.it}

\author[label2]{Nicola Castelletto}
\ead{castelletto1@llnl.gov}

\author[label1]{Massimiliano Ferronato}
\ead{massimiliano.ferronato@unipd.it}

\address[label1]{Department ICEA, University of Padova, Padova, IT}
\address[label2]{Atmospheric, Earth and Energy Division, Lawrence Livermore National Laboratory, United States}

\begin{abstract}
{
  The {relaxed physical factorization} (RPF) preconditioner 
  %
  is a recent algorithm allowing for the efficient and robust solution to the block linear systems arising from the three-field displacement-velocity-pressure formulation of coupled poromechanics.
  For its application, however, 
  it is necessary to invert blocks with the algebraic form $\hat{C} = ( C + \beta F F^T)$, where $C$ is a symmetric positive definite matrix, $FF^T$ a rank-deficient term, and $\beta$ a real non-negative coefficient. 
  The inversion of $\hat{C}$, performed in an inexact way, can become unstable for large values of $\beta$, as it usually occurs at some stages of a full poromechanical simulation.
  In this work, we propose a family of {algebraic techniques} to stabilize the inexact solve with $\hat{C}$. This strategy can prove useful in other problems as well where such an issue might arise, such as augmented Lagrangian preconditioning techniques for Navier-Stokes or incompressible elasticity.
  First, we introduce an iterative scheme obtained by a natural splitting of matrix 
  $\hat{C}$.
  Second, we develop a technique based on the use of a proper projection operator 
  annihilating the near-kernel modes of $\hat{C}$.
  Both approaches give rise to a novel class of preconditioners denoted as Enhanced RPF (ERPF).
  Effectiveness and robustness of the proposed algorithms are demonstrated in both theoretical benchmarks and real-world large-size applications, outperforming the native RPF preconditioner.
  }
\end{abstract}

\begin{keyword}
  Preconditioning \sep~Krylov subspace methods \sep~Poromechanics
\end{keyword}

\end{frontmatter}

\section{Introduction}\label{sec:int}
Numerical simulation of Darcy's flow in a porous medium coupled with quasi-static mechanical deformation is based on poromechanics theory \cite{Bio41,Cou04}.
{We consider saturated single-phase flow of a slightly compressible fluid through a deformable medium.
The set of governing equations for the displacement-velocity-pressure formulation of the poroelastic problem, e.g., \cite{Fri00,PhiWhe07a,PhiWhe07b,BauRadKoc17,Hu_etal17,Bot_etal18} reads: }
\begin{linenomath}
{\begin{subequations}
\begin{align}
  \nabla \cdot \left( \tensorFour{C}_{dr} : \nabla^{s} \tensorOne{u} - b p \tensorTwo{1} \right)
  &=
  \tensorOne{0} & &\mbox{ in } \Omega \times \mathcal{I}
  & &\mbox{(equilibrium)}, \label{momentumBalanceS}\\
  \mu \tensorTwo{\kappa}^{-1} \cdot \tensorOne{q} + \nabla p
  &=
  \tensorOne{0} & &\mbox{ in } \Omega \times \mathcal{I}
  & &\mbox{(Darcy's law)}, \label{darcyS}  \\
  b \nabla \cdot \dot{\tensorOne{u}} + S_{\epsilon} \dot{p} + \nabla \cdot \tensorOne{q} &=
  f & &\mbox{ in } \Omega \times \mathcal{I}
  & &\mbox{(continuity)}, \label{massBalanceS}
\end{align}\label{eq:IBVP}\null 
\end{subequations}}
\end{linenomath}
{with $\Omega\subset\mathbb{R}^3$ a bounded domain and $\mathcal{I}=(0,t_{max})$  an open time interval. 
In \eqref{eq:IBVP}, $\tensorFour{C}_{dr}$ is the rank-four elasticity tensor, $\nabla^{s}$ is the symmetric gradient operator, 
$b$ is the Biot coefficient, and $\tensorTwo{1}$ is the rank-two        
identity tensor; $\mu$ and $\tensorTwo{\kappa}$ are the fluid viscosity and the rank-two permeability tensor, respectively; 
$S_{\epsilon}$ is the constrained specific storage coefficient, i.e. the reciprocal of Biot's modulus, and $f$ the fluid source term.
The primary unknowns are denoted by $\tensorOne{u}$ (displacement), $\tensorOne{q}$ (Darcy's velocity), and $p$ (excess pore pressure).}
%
%
The problem is completed with proper boundary conditions:
\begin{linenomath}
\begin{subequations}
\begin{align}
  \tensorOne{u} &= \bar{\tensorOne{u}}
  & &\mbox{ on } \Gamma_u \times \mathcal{I}, & &\mbox{(prescribed displacement)}, \label{momentumBalanceS_DIR}\\
  \left( \tensorFour{C}_{dr} : \nabla^{s} \tensorOne{u} - b p \tensorTwo{1} \right) \cdot \tensorOne{n} &= 
  \bar{\tensorOne{t}}
  & &\mbox{ on } \Gamma_{\sigma} \times \mathcal{I}, & &\mbox{(prescribed traction)}, \label{momentumBalanceS_NEU}\\
  \tensorOne{q} \cdot \tensorOne{n} &= \bar{q}
  & &\mbox{ on } \Gamma_{q} \times \mathcal{I}, & &\mbox{(prescribed discharge)}, \label{massBalanceS_NEU}\\    
  p &=\bar{p}
  & &\mbox{ on } \Gamma_p \times \mathcal{I}, & &\mbox{(prescribed pressure)}, \label{massBalanceS_DIR}
\end{align}\label{eq:IBVP_b}\null
\end{subequations}
\end{linenomath}
where $\bar{\tensorOne{u}}$, $\bar{\tensorOne{t}}$, $\bar{q}$, and $\bar{p}$ are the boundary displacement, traction, Darcy velocity and excess pore pressure, respectively, on the portions $\Gamma_u$, $\Gamma_{\sigma}$, $\Gamma_q$, and $\Gamma_p$ of the frontier $\partial \Omega$ such that $\Gamma_u\cup\Gamma_{\sigma}=\Gamma_q\cup\Gamma_p=\partial\Omega$ and $\Gamma_u\cap\Gamma_{\sigma}=\Gamma_q\cap\Gamma_p=\emptyset$.
%
The initial condition can be given as
\begin{equation}
  b \nabla \cdot \tensorOne{u} (\tensorOne{x}, 0) + S_{\epsilon} p (\tensorOne{x}, 0) = b \nabla \cdot {\tensorOne{u}_0 } + S_{\epsilon} {p_0},
  \qquad
  \tensorOne{x} \in \overline{\Omega},
  \label{eq:inicon}
\end{equation}
with $\tensorOne{u}_0$ and $p_0$ the initial displacement and excess pore pressure field. While equation \eqref{eq:inicon} specifies the initial fluid content of the medium \cite{Bio41}, in practice $p_0$ is often directly measured or computed, and $\tensorOne{u}_0$ is then obtained so as to satisfy the momentum balance \eqref{momentumBalanceS}---see, e.g. \cite{GirKumWhe16,GirWheAlmDan19}.
%
The focus of this work is the iterative solution of the linear algebraic system arising from the discretization of the initial boundary value problem \eqref{eq:IBVP}-\eqref{eq:inicon}. 
%
In particular, we consider the block linear system $\blkMat{A} \vec{x} = \vec{F}$ obtained by combining a mixed finite element discretization in space with implicit integration in time using the $\theta$-method \cite{FerCasGam10}:
\begin{linenomath}
\begin{align}
	\blkMat{A} &=
	\begin{bmatrix}
		K & 0 & -Q \\
		0 & A & -B \\
		Q^T & \gamma B^T & P
	\end{bmatrix},
	&
	\vec{x} &= 
	\begin{bmatrix}
		\vec{u} \\
		\vec{q} \\
		\vec{p}
	\end{bmatrix},
	&
	\vec{F} &=
	\begin{bmatrix}
		\vec{f}_u \\
		\vec{f}_q \\
		\vec{f}_p
	\end{bmatrix},
	\label{eq:sys}
\end{align}
\end{linenomath}
where $\vec{u} \in \mathbb{R}^{n_u}$, $\vec{q} \in \mathbb{R}^{n_q}$ and $\vec{p} \in \mathbb{R}^{n_p}$ denote the vectors containing the unknown $n_u$ displacement, $n_q$ Darcy velocity, and $n_p$ pressure degrees of freedom, respectively, at discrete time $t_{n+1}$, and $\gamma=\theta \Delta t$, with $\Delta t=(t_{n+1}-t_{n})$ the time integration step size and $\theta$ a real parameter $(1/2 \le \theta \le1)$.
In \eqref{eq:sys}, $K$ is the classic small-strain structural stiffness matrix, $A$ is the (scaled) velocity mass matrix, $P$ is the (scaled) pressure mass matrix, $Q$ is the poromechanical coupling block and $B$ is the Gram matrix.
We employ $\mathbb{Q}_1$ elements for the displacement field, $\mathbb{RT}_0$ elements for the velocity field, and $\mathbb{P}_0$ elements for the pressure field. 
Assuming a linear elastic law for the mechanical behavior of the porous medium leads to a symmetric positive definite (SPD) matrix $K$.
Matrix $A$ is SPD as well, while $P$ is diagonal with non-negative entries.
Additional details and explicit expressions for matrices in $\blkMat{A}$ and vectors in the block right-hand side $\vec{F}$ can be found in \cite{FerCasGam10,CasWhiFer16}. 


{
The two-field displacement-pressure formulation discretized by the continuous Galerkin (CG) finite elements is  probably the most common technique for numerical modeling of coupled poromechanics \cite{LewSch98}.
In constrast to the CG method, the selected three-field formulation yields locally mass-conservative velocity fields and is robust for highly heterogeneous permeability fields, a key requirement in several geoscience applications, e.g., \cite{CasFerGam12,JhaJua14}.
We note that an enriched Galerkin (EG) method can be also used to fix this drawback of the CG method \cite{ChoLee18,Cho18}.
If the selected combination of discretization spaces does not intrinsically satisfy the inf-sup stability in the undrained limit \cite{Lip02}, 
%
proper stabilization strategies can be introduced to eliminate the spurious oscillation modes arising in the pressure solution. 
These techniques can be subdivided into two classes, based on whether they: 
(i) enrich the discrete variable spaces so that the Ladyzhenskaya-Babu\v{s}ka-Brezzi (LBB) condition is met, e.g.~\cite{Rod_etal18,NiuRuiHu19}, or 
(ii) introduce a stabilization term to guarantee the solvability of the resulting saddle-point problem, e.g.~\cite{HonMalFerJan18,CamWhiBor19}. 
The first strategy is mathematically elegant and robust, 
but can modify the algebraic structure of problem \eqref{eq:sys}. 
The selected $\mathbb{Q}_1$-$\mathbb{RT}_0$-$\mathbb{P}_0$ mixed discretization can be effectively stabilized following the second strategy by adding a proper term to the mass balance equation~\cite{FriCasFerWhi20}.
This approach has the advantage of having a minor impact on the algebraic structure of the problem \eqref{eq:sys}, 
so that the solution algorithms developed in this work can be used independently of the presence of such a contribution.}

Our goal is the efficient iterative solution of the linear system \eqref{eq:sys} with the aid of preconditioned Krylov subspace methods.
Since $\blkMat{A}$ is non-symmetric, or indefinite if properly symmetrized, a global Bi-CGStab \cite{Vor92} or GMRES \cite{SaaSch86} algorithm can be used as a solver.
%
%
{
Over the past decade, a growing interest has regarded the implicit solution 
of coupled poromechanics, with the introduction of a number of methods built on both fully-algebraic approaches and discretization-based strategies.
%
The proposed algorithms can be grouped into two main categories:
(i) \textit{sequential-implicit} methods, in which the primary variables are updated one at a time, iterating between governing equations 
\cite{JhaJua07,GirKumWhe16,Alm_etal16,Bot_etal17,Bor_etal18,DanGanWhe18,DanWhe18,Hon_etal18}; and (ii) \textit{fully-coupled} approaches, which solve the global system of equations simultaneously 
for all the primary unknowns, with an emphasis on
preconditioned Krylov solvers with scalable and parameter-robust 
methods \cite{Lip02,Kuz_etal03,FerCasGam10,WhiBor11,HagOsnLan12a,HagOsnLan12b,AxeBlaByc12,TurArb14,CasWhiFer16,GasRod17,Luo_etal17,LeeMarWin17,Adl_etal18,HonKra18,FerFraJanCasTch19,Adl_etal20,Bui_etal20}. 
Sequential-implicit methods exhibit a linear convergence, but can take advantage of using distinct, and specialized, codes for the structural and the flow models.
In contrast,
fully-coupled approaches ensure unconditional stability with a super-linear convergence, but require the design of dedicated preconditioning operators.
This research field, in particular, has attracted an increasing interest from the scientific community, with the introduction of a number of different algorithms in the last few years.
%
Just to cite a few recent examples of preconditioners for coupled poromechanics, we mention spectrally-equivalent block diagonal and block triangular strategies \cite{Lip02,Kuz_etal03,TurArb14,LeeMarWin17,HonKra18,Adl_etal20},
multigrid-based scalable methods \cite{GasRod17,Luo_etal17,Bui_etal20},
or block preconditioners combining physically-based and algebraic tricks to construct different Schur complement 
approximations \cite{FerCasGam10,CasWhiFer16,FerFraJanCasTch19,FriCasFerWhi20}. 
A recent novel approach belonging to the category of fully-coupled methods is the relaxed physical factorization (RPF) preconditioner \cite{FriCasFer19}.
The distinctive feature of the RPF algorithm is that it does not rely on accurate sparse approximations of the Schur complements, with the convergence accelerated by setting a nearly-optimal relaxation parameter $\alpha$.
Similar approaches were originally developed for the solution of the linear system arising from the discretization of the Navier-Stokes equations \cite{BenNgNiuZhe11,BenDepGraQua16}.

The RPF technique has been shown very efficient in solving challenging problems with severe material heterogeneities {and proved competitive with other well-established block preconditioners \cite{FriCasFer19}.}
However, a performance degradation can be observed when the time-step  approaches 0 (undrained conditions) or $+\infty$ (fully drained conditions).
This behavior is due to the increase of the ill-conditioning of the inner blocks, which causes the worsening of the RPF  efficiency and robustness.
In fact, the native RPF preconditioner requires the solution of inner systems in the form $\hat{C}=(C+\beta FF^T)$, where $C$ is SPD and $FF^T$ is rank-deficient, and $\beta$ is a scalar coefficient that may tend to infinity in the limit case of $\frac{\Delta t}{t_c} \ll 1$ (undrained conditions) or $\frac{\Delta t}{t_c} \gg 1$ (uncoupled consolidation), with $t_c$ the characteristic consolidation time \cite{Cou04}.
Problematic issues with a very similar algebraic origin may frequently occur in several other important applications 
beyond coupled poromechanics and independently of the selected discretization spaces, e.g., in augmented Lagrangian approaches for the Navier-Stokes equations or mixed formulations of incompressible elasticity \cite{BriSte96,OlsBen07,BenOls11,BenOlsWan11,FarMitWec19}.
In this paper, we advance two methods to stabilize the solves with the inner blocks $\hat{C}$.
In the first approach, a natural splitting of $\hat{C}$ is proposed to obtain a convergent sequential iterative scheme.
In the second approach, a proper projection operator is introduced to annihilate the near-kernel modes of $\hat{C}$.
{We would like to emphasize that the proposed two strategies are fully algebraic, hence they are naturally amenable to a generalization to other applications where such an issue might arise.}

The paper is organized as follows.
In Section \ref{sec:RPF} a brief review of the RPF preconditioner is provided, {with a focus on the numerical challenges associated with} large and small time-step values.
Section \ref{sec:RPFissues} provides the theoretical basis for the two methods introduced for the RPF stabilization, with the related properties, performance 
and robustness investigated in Section \ref{sec:NumRes} through both academic benchmarks and real-world applications.
Finally, a few conclusive remarks close the paper in Section \ref{sec:conc}.

\section{The RPF Preconditioner }\label{sec:RPF}
In this section, we briefly revisit the RPF preconditioner \cite{FriCasFer19} recalling only the aspects necessary for the 
remainder of this work.
In factored form, the RPF preconditioner $\blkMat{M}^{-1}$ reads:
\begin{equation}\label{eq:RPF}
	\blkMat{M}^{-1}=\alpha \left(\blkMat{M}_1 \blkMat{M}_2\right)^{-1}=\alpha \left(
  \begin{bmatrix} K & 0 & -Q\\ 0 & \alpha I_q & 0\\ Q^T & 0 & \alpha I_p\\ \end{bmatrix}
  \begin{bmatrix} \alpha I_u & 0 & 0\\ 0 & A & -B\\ 
  0 &\gamma B^T & \alpha I_p\\ \end{bmatrix} \right)^{-1},
\end{equation} 
where $\alpha>\|P\|_\infty$ is the relaxation parameter, and $I_u$, $I_q$, and $I_p$ are the identity matrix in 
$\mathbb{R}^{n_u \times n_u}$, $\mathbb{R}^{n_q \times n_q}$, and $\mathbb{R}^{n_p \times n_p}$, respectively. 
Setting $\alpha$ is key to accelerate the convergence of the Krylov subspace method preconditioned with \eqref{eq:RPF} 
to solve \eqref{eq:sys}. 
Though the convergence rate of non-symmetric Krylov solvers does not depend only on the eigenvalue distribution 
of the preconditioned matrix, the computational practice suggests that a clustered eigenspectrum rarely leads to 
slow convergence. Hence, the criterion for setting $\alpha$ relies on clustering as much as possible the eigenvalues 
of the preconditioned matrix $\blkMat{T}=\blkMat{M}^{-1}\blkMat{A}$ around 1. To this aim, $\alpha$ can be selected 
such that the trace of $\blkMat{T}$ is as close as possible to the system size:
\begin{equation} \label{eq:traceA}
  \alpha = \arg \min_{\alpha \geq \|P\|_{\infty} } \left ( n_u + n_q + n_p - \operatorname{tr}  \left[ \blkMat{T} 
	\right] \right ).
\end{equation}
Following the arguments developed in \cite{FriCasFer19}, equation \eqref{eq:traceA} can be rewritten as:
\begin{equation} \label{eq:traceZ}
  \alpha = \arg \min_{\alpha \geq \|P\|_{\infty} } \left (n_p - \operatorname{tr}  \left[ Z_{\alpha} \right] \right ),
\end{equation}
where
\begin{equation}\label{eq:Z_alp}
  Z_{\alpha}= \alpha \left( \alpha I_p + S_K \right)^{-1} S  \left( \alpha I_p + \gamma S_A \right)^{-1},
\end{equation}
and
\begin{eqnarray}
  S_K &=& Q^TK^{-1}Q, \label{eq:SK} \\
  S_A &=& B^TA^{-1}B, \label{eq:SA}\\
  S_{\; \;} &=& P + S_K + \gamma S_A. \label{eq:S}
\end{eqnarray}
Finding analytically $\alpha$ as in \eqref{eq:traceZ} is not practically feasible, because the matrices \eqref{eq:SK}, 
\eqref{eq:SA}, \eqref{eq:S} are dense. However, for the sake of $\alpha$ estimate only, we can replace $S_K$ and $S_A$ with 
diagonal matrices, $D_K$ and $D_A$, respectively. As far as $S_K$ is concerned, it can be effectively approximated by 
the so-called ``fixed-stress'' matrix \cite{CasWhiTch15,WhiCasTch16}, also computed in a fully algebraic way \cite{CasWhiFer16}.
For the matrix $S_A$, we follow the approach used in \cite{BerManMan98} by defining:
\begin{equation}\label{eq:DA}
  D_A= \operatorname{diag} \left ( B^T \tilde{A}^{-1} B\right),
\end{equation}
where
\begin{equation}
\tilde{A}=\operatorname{diag}\left( a_1, a_2, \ldots, a_{n_q} \right), \qquad
a_i=\left( \sum_{j=1}^{n_q} \left| A_{ij} \right|^2 \right)^{1/2}, \qquad i\in\{1,\ldots,n_q\}.
\label{eq:tilde_A}
\end{equation}
With the diagonal approximations $D_K$ and $D_A$ instead of $S_K$ and $S_A$, the trace of $Z_{\alpha}$ can be computed 
at a negligible cost. The value of $\alpha$ is therefore obtained from \eqref{eq:traceZ} as \cite{FriCasFer19}:
\begin{equation}
	\alpha = \frac{\sqrt{\gamma}}{n_p} \sum_{i=1}^{n_p} \sqrt{D_K^{(i)} D_A^{(i)}}.
	\label{eq:alpha_opt}
\end{equation}
\begin{remark}
Throughout this paper, we overload the operator $\operatorname{diag}(\cdot)$ to create a square diagonal matrix.
In particular, if $\mybold{v} \in \mathbb{R}^n$, then $\operatorname{diag}(\mybold{v})$ returns a diagonal matrix 
with the elements of vector $\mybold{v}$ (Eq. \eqref{eq:tilde_A}) ; 
otherwise, if $V \in \mathbb{R}^{n \times n}$, then $\operatorname{diag}(V)$ returns a diagonal matrix 
consisting of the main  diagonal of $V$ (Eq. \eqref{eq:DA}).
\end{remark}
\begin{remark}
We emphasize that, in many approaches, the design of robust and efficient preconditioners based on accurate approximations of matrices \eqref{eq:SK}, 
\eqref{eq:SA}, and \eqref{eq:S}---i.e., the Schur complement approximation---is the  focus. Conversely, in the RPF framework cheap diagonal approximations of such matrices are enough to enable effective estimates for the parameter $\alpha$, the key component to the success of this preconditioning technique.
\end{remark}

To apply $\blkMat{M}^{-1}$ to a vector, it is convenient to rewrite \eqref{eq:RPF} as:
\begin{equation}\label{eq:M1M2M3M4}
  \mybold{M}^{-1}=\left(
  \begin{bmatrix} I_u & 0 & -\frac{1}{\alpha}{Q}\\ 0 & I_q & 0 \\ 0 & 0 & I_p\\ \end{bmatrix}
  \begin{bmatrix} \hat{K} & 0 & 0\\ 0 & I_q & 0 \\ Q^T & 0 & I_p\\ \end{bmatrix}
  \begin{bmatrix} I_u & 0 & 0\\ 0 & \hat{A} & -B \\ 0 & 0 & \alpha I_p\\ \end{bmatrix}
  \begin{bmatrix} I_u & 0 &0 \\ 0 & I_q & 0 \\ 0 & \frac{\gamma}{\alpha}{B^T} & I_p\\ \end{bmatrix}
  \right)^{-1},
\end{equation}
with 
\begin{eqnarray}
	\hat{K} &=& K + \frac{1}{\alpha} Q Q^T, \label{eq:Khat} \\
	\hat{A} &=& A + \frac{\gamma}{\alpha} B B^T. \label{eq:Ahat}
\end{eqnarray}
Equation \eqref{eq:M1M2M3M4} shows that the RPF application requires two inner solves with matrices $\hat{K}$ and
$\hat{A}$ of \eqref{eq:Khat}-\eqref{eq:Ahat}. 
Notice that 
such matrices depend on $\alpha$, whose inverse multiplies the rank-deficient matrices $Q Q^T$ and $B B^T$. The latter
could prevail on the SPD contributions $K$ and $A$ for relatively small values of $\alpha$, thus affecting the accuracy 
and numerical stability in the application of $\hat{K}^{-1}$ and $\hat{A}^{-1}$. 
For this reason, two limiting values, $\alpha_K$ and $\alpha_A$, are introduced, with the definitions \eqref{eq:Khat}-\eqref{eq:Ahat} modified as:
\begin{eqnarray}
	\hat{K} &=& K + \frac{1}{\max\left(\alpha,\alpha_K\right)} Q Q^T, \label{eq:Khat_mod} \\
	\hat{A} &=& A + \frac{\gamma}{\max\left(\alpha,\alpha_A\right)} B B^T. \label{eq:Ahat_mod}
\end{eqnarray}
Following the result of Theorem 3.1 in \cite{FriCasFer19}, the bounds $\alpha_K$ and $\alpha_A$ read:
\begin{equation}\label{eq:limit_alpha_ill}
  \alpha_K = \frac{\lambda_1(S_K)}{\overline{\omega_K} - 1}, \qquad
  \alpha_A = \frac{\gamma \lambda_1(S_A)}{\overline{\omega_A} - 1},
\end{equation}
where $\lambda_1(S_K)$ and $\lambda_1(S_A)$ are the spectral radii of $S_K$ and $S_A$, and $\overline{\omega_K}$ and $\overline{\omega_A}$ are user-specified parameters denoting the maximum acceptable value for the ratios 
$\kappa(\hat{K})/ \kappa({K})$ and $\kappa(\hat{A})/ \kappa({A})$, i.e., the maximum increase of the spectral
condition number of $\hat{K}$ and $\hat{A}$ with respect to that of $K$ and $A$, respectively. 
For the practical computation of $\alpha_K$ and $\alpha_A$, in equation \eqref{eq:limit_alpha_ill} $S_K$
and $S_A$ can be replaced by $D_K$ and $D_A$, respectively.


The RPF set-up and application to the vector $\mybold{r}^T=[\mybold{r}_u^T,\mybold{r}_q^T,\mybold{r}_p^T]$ are summarized 
in Algorithm \ref{alg:RPF0} and \ref{alg:appRPF}, respectively. Of course, lines 3 and 8 of Algorithm \ref{alg:appRPF}
can be replaced by an approximate application of $\hat{K}^{-1}$ and $\hat{A}^{-1}$.

\begin{algorithm}[tbh]
	\caption{RPF computation: $\blkMat{M}=\mbox{\sc RPF\_SetUp}(\gamma,\overline{\omega_K},\overline{\omega_A},D_K,D_A,\blkMat{A})$}
  \label{alg:RPF0}
  \begin{algorithmic}[1]
	  \State $\lambda_1(D_{K})=\max_i D_K^{(i)}$
    \State $\lambda_1(D_{A})=\max_i D_A^{(i)}$ 
    \State $\alpha_K = (\overline{\omega_K}-1)^{-1} \lambda_1 (D_K)$
    \State $\alpha_A = (\overline{\omega_A}-1)^{-1} \gamma \lambda_1 (D_A)$
	  \State $\alpha=\sqrt{\gamma} n_p^{-1} \sum_i \sqrt{D_K^{(i)}D_A^{(i)}}$
    \State $\hat{K}=QQ^T$  
    \State $\hat{A}=BB^T$    
    \State $\hat{K} \gets K + \max(\alpha,\alpha_K)^{-1} \hat{K}$
	  \State $\hat{A} \gets A + \gamma \max(\alpha,\alpha_A)^{-1} \hat{A}$
  \end{algorithmic}
\end{algorithm}


\begin{algorithm}
	\caption{RPF application: $[\mybold{t}_u^T,\mybold{t}_q^T,\mybold{t}_p^T]=\mbox{\sc RPF\_Apply}(\mybold{r}_u,\mybold{r}_q,\mybold{r}_p,\blkMat{A},\blkMat{M})$}
  \label{alg:appRPF}
  \begin{algorithmic}[1]
    \State $\mybold{x}_u=\alpha^{-1} Q \mybold{r}_p$
    \State $\mybold{x}_u \gets \mybold{r}_u + \mybold{x}_u$
	  \State \textbf{Solve} $\hat{K}\mybold{t}_u=\mybold{x}_u$
    \State $\mybold{y}_p=Q^T \mybold{t}_u$
    \State  $\mybold{y}_p \gets \mybold{r}_p - \mybold{y}_p$
    \State $\mybold{z}_q=B \mybold{y}_p$
    \State $\mybold{z}_q \gets \mybold{r}_q + \alpha^{-1} \mybold{z}_q$      
    \State \textbf{Solve} $\hat{A}\mybold{t}_q=\mybold{z}_q$   
    \State $\mybold{t}_p=B^T \mybold{t}_q$
    \State $\mybold{t}_p \gets \alpha^{-1} \left( \mybold{y}_p - \gamma \mybold{t}_p \right)$     
  \end{algorithmic}
\end{algorithm}

\section{Enhanced RPF preconditioner}\label{sec:RPFissues}
The use of equations \eqref{eq:Khat_mod}-\eqref{eq:Ahat_mod} instead of \eqref{eq:Khat}-\eqref{eq:Ahat} often is not sufficient to guarantee the RPF efficiency.
Especially for large time-step values, a degradation of the RPF performance can be observed. There are several reasons 
for this behavior. First, the value of $\alpha_K$ and $\alpha_A$ can be quite far from the optimal value of $\alpha$, 
in particular for $\frac{\gamma}{t_c} \gg 1$. This can be heuristically noted by considering that $\alpha$ is proportional to
$\sqrt{\gamma}$ (equation \eqref{eq:alpha_opt}), while $\alpha_K$ is constant and $\alpha_A$ varies linearly with $\gamma$
(equation \eqref{eq:limit_alpha_ill}). The separation between $\alpha_K$ and $\alpha$ is finite for $\gamma \rightarrow 0$, 
whereas for $\gamma \rightarrow \infty$ the separation between $\alpha_A$ and $\alpha$ grows to infinity. Hence, the larger 
the value of $\gamma$, the greater is the distance between the optimal $\alpha$ and $\alpha_A$.

The impact of setting a bound to $\alpha$ on the overall quality of the RPF preconditioner can be investigated as follows. 
For the sake of generality, we use the notation:
\begin{equation}\label{eq:hat_Cu_b}
  \hat{C} \mybold{w}= \mybold{b},
\end{equation}
with:
\begin{equation}\label{eq:hat_C}
  \hat{C} = C + \beta F   F^T,
\end{equation}
to denote the inner solves required by the RPF application (lines 3 and 8 of Algorithm \ref{alg:appRPF}). The following
result holds.

\begin{theorem}\label{thm:thm1}
	Let $C\in\mathbb{R}^{n\times n}$ and $F\in\mathbb{R}^{n\times n_p}$, with $n_p<n$, be an SPD and a full-rank matrix, 
	respectively. If $\beta > \beta_\ell > 0$, then the $n$ eigenvalues $\lambda_i$ of the generalized eigenproblem  
	\begin{equation}\label{eq:geneig} 
		\left ( C + \beta FF^T \right) \mybold{v}_i = \left ( C + \beta_\ell FF^T \right) \lambda_i \mybold{v}_i 
	\end{equation}
	are bounded in the interval $[1,\lambda_1]$, where: 
	\begin{equation}\label{eq:lambda1}
		\lambda_1 = \frac{\beta \mu_1 (S_C) +1}{\beta_\ell \mu_1 (S_C) +1},
	\end{equation} 
	$S_C = F^T C^{-1} F$, and $\mu_1(S_C)$ is the spectral radius of $S_C$.
\end{theorem}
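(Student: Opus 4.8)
The plan is to exploit the fact that both matrices of the pencil, $C + \beta FF^T$ and $C + \beta_\ell FF^T$, are SPD, since $C$ is SPD, $FF^T$ is positive semidefinite, and $\beta,\beta_\ell>0$. Consequently, by simultaneous diagonalization by congruence of two symmetric matrices one of which is SPD, the generalized eigenvalues $\lambda_i$ of \eqref{eq:geneig} are real and positive and, by the associated Courant--Fischer characterization, all lie between the extrema of the generalized Rayleigh quotient
\[
  R(\mybold{v}) = \frac{\mybold{v}^T \left( C + \beta FF^T \right) \mybold{v}}{\mybold{v}^T \left( C + \beta_\ell FF^T \right) \mybold{v}}, \qquad \mybold{v} \neq \mybold{0}.
\]
It therefore suffices to show that $R$ ranges exactly over $[1,\lambda_1]$.

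First I would simplify the quotient. Setting $a = \mybold{v}^T C \mybold{v} > 0$ and $b = \|F^T \mybold{v}\|^2 = \mybold{v}^T FF^T \mybold{v} \geq 0$, one obtains $R(\mybold{v}) = (a+\beta b)/(a+\beta_\ell b) = g(s)$, with $s = b/a \geq 0$ and $g(s) = (1+\beta s)/(1+\beta_\ell s)$. Because $\beta > \beta_\ell$, the scalar function $g$ is strictly increasing on $[0,\infty)$ and satisfies $g(0)=1$; hence the range of $R$ is governed entirely by the range of $s$.

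The core step is then to identify the range of $s(\mybold{v}) = \|F^T \mybold{v}\|^2 / (\mybold{v}^T C \mybold{v})$. The lower end is immediate: since $n_p < n$ and $F$ has full rank, $\ker F^T$ is nontrivial, so $s=0$ is attained; any such $\mybold{v}$ is in fact a generalized eigenvector with eigenvalue $g(0)=1$, so $1$ has multiplicity at least $n-n_p$. For the upper end I would substitute $\mybold{w} = C^{1/2}\mybold{v}$, turning $s$ into the ordinary Rayleigh quotient $\mybold{w}^T C^{-1/2} FF^T C^{-1/2} \mybold{w} / (\mybold{w}^T \mybold{w})$, whose maximum is the largest eigenvalue of $C^{-1/2} FF^T C^{-1/2}$. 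Writing $X = C^{-1/2}F$, this matrix is $XX^T$, which has the same nonzero eigenvalues as $X^TX = F^T C^{-1} F = S_C$; hence its largest eigenvalue equals $\mu_1(S_C)$. Thus $s \in [0,\mu_1(S_C)]$, and monotonicity of $g$ gives $R(\mybold{v}) \in [g(0), g(\mu_1(S_C))] = [1,\lambda_1]$, after the elementary computation $g(\mu_1(S_C)) = (1+\beta\mu_1(S_C))/(1+\beta_\ell\mu_1(S_C))$, which is exactly \eqref{eq:lambda1}.

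The only delicate points are the invocation of the Rayleigh-quotient bound for the SPD pencil and the identification $\max s = \mu_1(S_C)$; both become routine once the change of variable $\mybold{w}=C^{1/2}\mybold{v}$ and the coincidence of nonzero spectra of $XX^T$ and $X^TX$ are in place. An equivalent route, if preferred, is to write $C + \beta FF^T = (C+\beta_\ell FF^T) + (\beta-\beta_\ell)FF^T$, so that $\lambda_i = 1 + (\beta-\beta_\ell)\eta_i$ with $\eta_i \geq 0$ the eigenvalues of $(C+\beta_\ell FF^T)^{-1}FF^T$; a Sherman--Morrison--Woodbury expansion then yields $\eta_i = \sigma_i/(1+\beta_\ell\sigma_i)$ in terms of the eigenvalues $\sigma_i$ of $S_C$, leading to the same bounds.
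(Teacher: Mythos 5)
Your proof is correct, but it follows a genuinely different, variational route from the paper's. The paper argues by transforming the eigenproblem directly: it rewrites \eqref{eq:geneig} as $\left[ C(1-\lambda_i) + (\beta-\beta_\ell\lambda_i)FF^T\right]\mybold{v}_i = \mybold{0}$, introduces $H = C^{-1}FF^T$ and $\mu_i = (\lambda_i-1)/(\beta-\beta_\ell\lambda_i)$, and observes that the generalized eigenvalues are exactly the images of the eigenvalues of $H$ (which are $0$ with multiplicity $n-n_p$ together with the spectrum of $S_C$) under the monotone M\"obius map $\lambda = (\beta\mu+1)/(\beta_\ell\mu+1)$, so it obtains the whole spectrum, not just an enclosing interval. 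You instead bound the generalized Rayleigh quotient of the SPD pencil, reduce it to the scalar function $g(s)=(1+\beta s)/(1+\beta_\ell s)$ with $s$ the ordinary Rayleigh quotient of $C^{-1/2}FF^TC^{-1/2}$, and identify $\max s = \mu_1(S_C)$ through the coincidence of the nonzero spectra of $XX^T$ and $X^TX$. Both arguments rest on the same two ingredients---monotonicity of $g$ and the link to $S_C$---but yours buys a small technical advantage: the paper's substitution implicitly divides by $\beta-\beta_\ell\lambda_i$, which silently requires ruling out $\lambda_i=\beta/\beta_\ell$ (easy, since that value would force $C\mybold{v}_i(1-\lambda_i)=\mybold{0}$ with $\lambda_i>1$, but it goes unstated), whereas your Rayleigh-quotient argument has no degenerate case and also yields the multiplicity $n-n_p$ of the unit eigenvalue for free. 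What the paper's route buys in exchange is the exact one-to-one correspondence $\lambda_i \leftrightarrow \mu_i$ of \eqref{eq:lam_i}, a form that is reused later: the matrix $H=C^{-1}FF^T$ and its spectral structure reappear in the convergence analysis of Method 1 (Lemma \ref{lem:m1}). Your closing alternative---splitting $C+\beta FF^T = (C+\beta_\ell FF^T)+(\beta-\beta_\ell)FF^T$ and applying Sherman--Morrison--Woodbury---is in fact the closest relative of the paper's argument and echoes Remark \ref{rem:SMW}.
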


\begin{proof}
	The generalized eigenvalue problem \eqref{eq:geneig} can be rewritten as:
	\begin{equation}\label{eq:geneig1}
		\left [ C (1 - \lambda_i) + (\beta - \beta_\ell \lambda_i) F F^T \right] \mybold{v}_i = \mybold{0}.
	\end{equation}
	Setting $H = C^{-1} F F^T$ and  $\mu_i = (\lambda_i - 1) / (\beta -  \beta_\ell \lambda_i)$, equation 
	\eqref{eq:geneig1} reads:
	\begin{equation} \label{eq:eigH}
		H \mybold{v}_i = \mu_i  \mybold{v}_i.
	\end{equation}
	The eigenvalues $\mu_i$ of $H$ are either 0, with multiplicity $n - n_p$, or equal to those of $S_C$. Furthermore, the eigenvalues 
	$\lambda_i$ are related to $\mu_i$ as follows:
	\begin{equation}\label{eq:lam_i}
		\lambda_i= \frac{\beta \mu_i +1}{\beta_\ell \mu_i +1}.
	\end{equation}
	Since $\mu_i\in[0,\mu_1(S_C)]$ and $\lambda_i$ in \eqref{eq:lam_i} increases monotonically with $\mu_i$ for
	$\beta>\beta_\ell$, the eigenvalues $\lambda_i $ belong to the interval $\mathcal{I}$: 
	\begin{equation}\label{eq:mathI} 
		\mathcal{I} = \left \{  \lambda_i \in \mathbb{R} \; | \; 1 \le \lambda_i \le \frac{\beta \mu_1 (S_C) +1}
		{\beta_\ell \mu_1 (S_C) +1} \right \}.
	\end{equation}
\end{proof}

Let us denote with $\hat{C}_\ell$ the matrix
\begin{equation}
  \hat{C}_\ell = C+\beta_\ell FF^T,
  \label{eq:C_ell}
\end{equation}
obtained with the lower-bound value $\beta_\ell$ for $\beta$,
and use $\hat{C}_\ell^{-1}$ as a preconditioner for the inner solve with $\hat{C}$ in the RPF application (Algorithm 
\ref{alg:appRPF}). Theorem \ref{thm:thm1} shows that the spectral conditioning number of $\hat{C}_\ell^{-1}\hat{C}$
grows indefinitely with $\beta$, making the inner solve with $\hat{C}$ more difficult and expensive as
$\beta/\beta_\ell\rightarrow\infty$. If the solution to the inner system \eqref{eq:hat_Cu_b} is obtained inexactly,
as usually necessary for the sake of computational efficiency in large-size applications, the accuracy of $\mybold{w}$
is expected to decrease when the separation between $\beta$ and the limiting value $\beta_\ell$ grows.
This may lead to an
overall degradation of the RPF performance that might even prevent the outer Krylov solver from converging.

{
In order to stabilize the global RPF behavior, it is therefore important to restore equations \eqref{eq:Khat} and \eqref{eq:Ahat} in the preconditioner construction and enhance the accuracy in the local solve with $\hat{C}$. To this aim, we introduce here two different approaches:
}
\begin{itemize}
	\item Method 1: exploits the structure of $\hat{C}$ by defining a natural splitting that can be used to 
		develop an effective preconditioner for the inner problem;
	\item Method 2: uses an appropriate projection operator 
	in order to get rid of the
		near-null space of $\hat{C}$.
\end{itemize} 
The use of Method 1 and Method 2 gives rise to the Enhanced RPF preconditioners (ERPF1 and ERPF2, respectively).
{We notice here that both Method 1 and Method 2 are fully algebraic and are developed for a general SPD matrix $C$ and rank-deficient term $FF^T$. Hence, their generalization to applications beyond the one focussed in this work is reasonably straightforward.}

\subsection{Method 1}\label{subsec:met1}
A weighted splitting of matrix $\hat{C}$ is adopted to obtain a stationary iterative scheme. Let us write $\hat{C}$
as follows:
\begin{equation}\label{eq:hatC_split}
\hat{C} = \left(1-\frac{\beta}{\beta_\ell}\right) C + \frac{\beta}{\beta_\ell} \left(C+ \beta_\ell F F^T\right) = \left(1-\frac{\beta}{\beta_\ell}\right) C + \frac{\beta}{\beta_\ell} \hat{C}_\ell, 
\end{equation}
and consider the following stationary iteration to solve \eqref{eq:hat_Cu_b}:
\begin{equation}\label{eq:itsche}
{\mybold{w}_{k+1} = \mybold{w}_{k} + \frac{\beta_\ell}{\beta} \hat{C}_\ell^{-1} \mybold{r}_k, }
\end{equation}
with $\mybold{r}_k = (\mybold{b}  - \hat{C} \mybold{w}_{k} )$ the residual at iteration $k$.
The iteration matrix $G$ associated with scheme \eqref{eq:itsche} reads:
\begin{equation}\label{eq:itMat}
{G = I - \frac{\beta_\ell}{\beta} \hat{C}_\ell^{-1}\hat{C}  
= \left( 1 - \frac{\beta_\ell}{\beta} \right) \hat{C}_\ell^{-1}  C.} 
\end{equation}

\begin{lemma}\label{lem:m1}
The stationary scheme \eqref{eq:itsche} for the solution of the system
$\hat{C}\mybold{w}=\vec{b}$ of equation \eqref{eq:hat_Cu_b}, with  $\beta >\beta_\ell>0$, is unconditionally convergent with rate $R = \log(1-\beta_\ell/\beta)$. 
\end{lemma}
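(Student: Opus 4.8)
The plan is to reduce the convergence statement to a spectral-radius computation for the iteration matrix $G$ of \eqref{eq:itMat}. A stationary scheme of the form $\mybold{w}_{k+1} = G\mybold{w}_{k} + \text{const}$ converges from every initial guess if and only if $\rho(G) < 1$, and its error contracts asymptotically as $\|\mybold{w}_{k} - \mybold{w}\| \sim \rho(G)^{k}$, so that the rate is $R = \log\rho(G)$. Hence the whole lemma will follow once I show that $\rho(G) = 1 - \beta_\ell/\beta$, which is strictly smaller than $1$ for every admissible pair $\beta > \beta_\ell > 0$, thereby yielding unconditional convergence.

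To compute $\rho(G)$ I would exploit the factored form $G = I - \frac{\beta_\ell}{\beta}\hat{C}_\ell^{-1}\hat{C}$ appearing in \eqref{eq:itMat} and read off its spectrum directly from Theorem \ref{thm:thm1}. Since $G$ and $\hat{C}_\ell^{-1}\hat{C}$ share eigenvectors, each eigenvalue $\lambda_i$ of the generalized problem \eqref{eq:geneig} --- which Theorem \ref{thm:thm1} locates in the real interval $[1,\lambda_1]$ --- maps to the eigenvalue $\sigma_i = 1 - \frac{\beta_\ell}{\beta}\lambda_i$ of $G$. Because $\hat{C}_\ell$ and $\hat{C}$ are both SPD, these eigenvalues are real, so $G$ has real spectrum and $\rho(G) = \max_i |\sigma_i|$. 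The map $\lambda \mapsto 1 - \frac{\beta_\ell}{\beta}\lambda$ is affine and decreasing, so the extreme values of $\sigma_i$ are attained at the endpoints $\lambda = 1$ and $\lambda = \lambda_1$, giving $1 - \beta_\ell/\beta$ and $1 - \frac{\beta_\ell}{\beta}\lambda_1$, respectively.

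The delicate step, and the one I expect to be the main obstacle, is to verify that $\rho(G)$ is actually attained at $\lambda = 1$, i.e.\ that the endpoint $\lambda = \lambda_1$ does not produce an eigenvalue of larger modulus --- in particular a negative $\sigma$ that could dominate. Substituting $\lambda_1$ from \eqref{eq:lambda1}, one computes $\frac{\beta_\ell}{\beta}\lambda_1 = (\beta\beta_\ell\mu_1(S_C) + \beta_\ell)/(\beta\beta_\ell\mu_1(S_C) + \beta)$, whose numerator is strictly smaller than its denominator precisely because $\beta > \beta_\ell$; hence $\frac{\beta_\ell}{\beta}\lambda_1 < 1$ and the corresponding $\sigma = 1 - \frac{\beta_\ell}{\beta}\lambda_1 > 0$. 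Combined with $\lambda_i \ge 1$, which forces $\sigma_i \le 1 - \beta_\ell/\beta$, this shows every $\sigma_i$ lies in $(0,\,1 - \beta_\ell/\beta]$, the maximum being reached at $\lambda = 1$. I would therefore conclude $\rho(G) = 1 - \beta_\ell/\beta < 1$, which simultaneously establishes unconditional convergence and identifies the rate $R = \log\rho(G) = \log(1 - \beta_\ell/\beta)$, completing the argument.
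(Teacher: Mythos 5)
Your argument is sound and arrives at the correct spectral radius, but it takes a genuinely different route from the paper's. The paper works with the second factored form in \eqref{eq:itMat}, $G = (1-\beta_\ell/\beta)\,\hat{C}_\ell^{-1}C$: the scalar factor is pulled out, the remaining matrix is rewritten as $N = \hat{C}_\ell^{-1}C = (I+\beta_\ell H)^{-1}$ with $H = C^{-1}FF^T$, and its spectral radius is read off as $\lambda_1(N) = 1/(1+\beta_\ell\lambda_n(H)) = 1$, since $H$ is similar to the positive semidefinite, rank-deficient matrix $C^{-\frac{1}{2}}FF^TC^{-\frac{1}{2}}$ and therefore has $\lambda_n(H)=0$. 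You instead keep the form $G = I - (\beta_\ell/\beta)\hat{C}_\ell^{-1}\hat{C}$, import the interval $[1,\lambda_1]$ from Theorem \ref{thm:thm1}, and push it through the decreasing affine map $\lambda \mapsto 1 - (\beta_\ell/\beta)\lambda$; your sign check at the right endpoint, $(\beta_\ell/\beta)\lambda_1 = (\beta\beta_\ell\mu_1(S_C)+\beta_\ell)/(\beta\beta_\ell\mu_1(S_C)+\beta) < 1$, is correct and is precisely the step that rules out a dominant negative eigenvalue of $G$. Your version exhibits the lemma as essentially a corollary of Theorem \ref{thm:thm1}; the paper's version is self-contained and makes visible that the contraction factor is governed entirely by the regular SPD part $C$ rather than by $\hat{C}$.

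One loose end you should close: Theorem \ref{thm:thm1} only asserts \emph{containment} of the spectrum in $[1,\lambda_1]$, not that $\lambda=1$ is attained, so as written your chain yields $\rho(G)\le 1-\beta_\ell/\beta$ --- enough for unconditional convergence, but not yet the exact rate $R=\log(1-\beta_\ell/\beta)$ claimed by the lemma. The missing line is exactly the rank-deficiency observation that the paper places at the center of its proof: since $n_p<n$, the kernel of $F^T$ is nontrivial, and any nonzero $v\in\ker(F^T)$ satisfies $\hat{C}v = Cv = \hat{C}_\ell v$, so $\lambda=1$ is an eigenvalue of \eqref{eq:geneig} (with multiplicity $n-n_p$) and hence $1-\beta_\ell/\beta$ genuinely belongs to the spectrum of $G$. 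With that one sentence added, your proof is complete.
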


\begin{proof}
	The spectral radius $\lambda_1(G)$ of the iteration matrix is given by:
	\begin{equation}\label{eq:l1G}
		\lambda_1(G) = \left (1 - \frac{\beta_\ell}{\beta}\right ) \lambda_1({N}),
	\end{equation}
	with $N = \hat{C}_\ell^{-1} C$ and $\lambda_1(N)$ its spectral radius. Since $N$ reads
	\begin{equation}\label{eq:matE}
		N = \left( C+ \beta_\ell F   F^T \right)^{-1} C = \left[ C \left( I + \beta_\ell C^{-1} F F^T \right)
		\right]^{-1} C = \left( I + \beta_\ell H \right)^{-1},
	\end{equation}
	its spectral radius is:
	\begin{equation} \label{eq:l1E}
		\lambda_1(N) = \frac{1}{ 1+ \beta_\ell \lambda_n(H )},
	\end{equation}
	where $\lambda_n(H)$ denotes the smallest eigenvalue of $H=C^{-1}FF^T$. Being $H$ similar to the symmetric positive semidefinite 
	matrix $C^{-\frac{1}{2}} F F^T C^{-\frac{1}{2}}$, 
	$\lambda_n(H)=0$ and 
	\begin{equation} \label{eq:l1G_fin}
		\lambda_1(G) = 1 - \frac{\beta_\ell}{\beta} < 1.
	\end{equation}
	Hence, the stationary scheme \eqref{eq:itsche} is unconditionally convergent with rate $R = \log(1-\beta_\ell/\beta)$.
\end{proof}

The idea of Method 1 is to solve the system of equation \eqref{eq:hat_Cu_b} with the scheme \eqref{eq:itsche}. Since
in the RPF preconditioner application such system is solved inexactly, we carry out a pre-defined number of stationary
iterations, $n_{in}$, and replace the exact application of $\hat{C}_\ell^{-1}$ with an inexact one, say the inner preconditioner
$M_{\hat{C}_\ell}^{-1}$. This procedure is summarized in Algorithm \ref{alg:met1}.

\begin{algorithm}[tbh]
	\caption{Method 1: $\mybold{w}=\mbox{\sc MET1\_Apply}\left(n_{in},\beta,\beta_\ell,\vec{b},\hat{C},M^{-1}_{\hat{C}_\ell}\right)$}
	\label{alg:met1}
    \begin{algorithmic}[1]
		\State $\mybold{w} = \mybold{0}$
		\For {$k=1,\dots,n_{in}$}
		\State $\mybold{r} = \mybold{b} - \hat{C} \mybold{w}$
		\State \textbf{Apply} $M^{-1}_{\hat{C}_\ell}$ to $\mybold{r}$ to get $\mybold{v}$ 
		\State $\mybold{w} \gets \mybold{w} + (\beta_\ell/\beta) \mybold{v}$
		\EndFor
	\end{algorithmic}
\end{algorithm}

The ERPF1 preconditioner is obtained by replacing lines 3 and 8 of Algorithm \ref{alg:appRPF} with the function
in Algorithm \ref{alg:met1} whenever $\alpha<\alpha_K$ and $\alpha<\alpha_A$, respectively.
{
For each inner iteration 
$k$, Method 1 involves one matrix-vector product, one application of $M^{-1}_{\hat{C}_\ell}$ and two vector updates, hence the algorithm
cost can grow up quickly with $n_{in}$. More precisely, denoting by $n$ the number of rows of $\hat{C}$, the complexity $\chi_1$ of Algorithm \ref{alg:met1} reads:
\begin{equation}
    \chi_1 = n_{in} \times \left[ \chi \left( M^{-1}_{\hat{C}_\ell} \right) + \mathcal{O} \left(n\right) \right],
    \label{eq:comp_met1}
\end{equation}
where $\chi(M^{-1}_{\hat{C}_\ell})$ is the complexity of the inner preconditioner application.
}
For this reason, $n_{in}$ should be kept as low as possible, say 2 or 3.

{
\begin{remark}
\label{rem:speed_beta}
Equation \eqref{eq:l1G_fin} shows that the convergence rate of the iteration \eqref{eq:itsche} degrades to 0 as $\beta\rightarrow\infty$. In our application, $\beta$ is either $1/\alpha$ (equation \eqref{eq:Khat}) or $\gamma/\alpha$ (equation \eqref{eq:Ahat}). In a full poromechanical simulation, the condition $\gamma/\alpha\rightarrow\infty$ is more likely to occur and usually happens at the final stage of the process, when the solution approaches steady-state conditions and $\Delta t$, hence $\gamma$, progressively increases. In this situation, Method 1 is expected to lose efficiency because it may require too many inner iterations to obtain a sufficiently accurate result.
\end{remark}
}

\begin{remark}
    \label{rem:innerCG}
    {
    The stationary scheme \eqref{eq:itsche} is a Richardson iteration preconditioned by $(\beta_{\ell}/\beta)\hat{C}_{\ell}^{-1}$. Since $\hat{C}$ is SPD, it could be also effectively replaced by a Conjugate Gradient scheme with the same preconditioner. Stopping the inner iterations after $n_{in}=$ 2 or 3 steps, however, usually does not provide a significantly different outcome with respect to the use of Algorithm \ref{alg:met1}. 
    }
\end{remark}

\subsection{Method 2}\label{subsec:met2}
{
Another strategy to solve \eqref{eq:hat_Cu_b} relies on using an appropriate projection operator. In particular, the idea
is to project the system \eqref{eq:hat_Cu_b} onto a space $\mathcal{Z}$ such that:
\begin{equation}
    \mathbb{R}^n = \ker(F^T) \oplus \mathcal{Z},
    \label{eq:Zdef}
\end{equation}
so as to annihilate the components
of $\mybold{w}$ lying in the kernel of $F^T$.
Let us define the $\hat{C}$-orthogonal projector $P$ as:
\begin{equation}\label{eq:proj}
   P = Z \left[ Z^T \left( \hat{C} Z \right) \right]^{-1} Z^T \hat{C},
\end{equation}
where $Z=[\vec{z}_1,\dots,\vec{z}_{n_p}] \in \mathbb{R}^{n \times n_p}$ is the generator of $\mathcal{Z}$:
\begin{equation}\label{eq:spaceZ}
	\mathcal{Z} =\operatorname{span}\{\vec{z}_1,\dots,\vec{z}_{n_p}\}. 
\end{equation}
The operator $P$ is the linear mapping that projects a vector $\mybold{w}$ onto $\mathcal{Z}$ orthogonally to $\mathcal{L}$:
\begin{equation}\label{eq:spaceS}
   \mathcal{L} = \left \{ \mybold{y}_i \in \mathbb{R}^n \; | \; \mybold{y}_i = \hat{C} \mybold{z}_i, \; \mybold{z}_i \in  
	\mathcal{Z}    \right \}.
\end{equation}
Notice that the projector $P$ is $\hat{C}$-orthogonal according to the following definition \cite{Hac16}:
\begin{defin}
A projector $P$ onto a subspace $\mathcal{Z}$ is $\hat{C}$-orthogonal if and only if it is self-adjoint with respect 
to the $\hat{C}$-inner product.
\end{defin}
Of
course, we have
  $\ran{P} = \ran{Z}$.
The solution $\mybold{w}$ of \eqref{eq:hat_Cu_b} can be decomposed as the sum of two contributions:
\begin{equation}\label{eq:splitupu}
  \mybold{w} = P \mybold{w} + \left ( I - P \right ) \mybold{w} = \mybold{w}_{\mathcal{Z}} + \mybold{w}_{\mathcal{S}},
\end{equation}
where $\mybold{w}_{\mathcal{Z}}\in\mathcal{Z}$ and $\mybold{w}_{\mathcal{S}}\in\mathcal{S}=\ran{I-P}$. 
Applying the
projector \eqref{eq:proj}, the component $\mybold{w}_{\mathcal{Z}}$ is 
given by:
\begin{equation}\label{eq:u_Z}
   \mybold{w}_{\mathcal{Z}}  = P \mybold{w} = Z E^{-1} Z^T \hat{C} \mybold{w} =Z E^{-1} Z^T \mybold{b},
\end{equation}
where $E=Z^T\hat{C}Z$ is the projection of $\hat{C}$ onto $\mathcal{Z}$. In order to obtain $\mybold{w}_{\mathcal{S}}$, 
let us consider the following projected system:
\begin{equation}\label{eq:proSystem}
   \left ( I - P^T \right ) \hat{C}  \mybold{w} = \left ( I - P^T \right ) \mybold{b}.
\end{equation}
Recalling that $(I-P^T) \hat{C}=\hat{C} (I - P)$, 
equation \eqref{eq:proSystem} can be rewritten as:
\begin{equation}\label{eq:u_M1}
  \hat{C} \mybold{w}_{\mathcal{S}} = \left ( I - P^T \right ) \mybold{b}.
\end{equation}
Since $\mybold{w}_{\mathcal{S}}\perp\mathcal{L}$ of equation \eqref{eq:spaceS}, the following condition holds:
\begin{equation}
    Z^T \left( C + \beta F F^T \right) \mybold{w}_{\mathcal{S}} = 0.
    \label{eq:ortho_wS}
\end{equation}
If we set $Z=C^{-1}F$, equation \eqref{eq:ortho_wS} becomes:
\begin{equation}
    \left( I + \beta 
    S_C \right) F^T \mybold{w}_{\mathcal{S}} = 0,
    \label{eq:wS_kerFt}
\end{equation}
which implies $\mybold{w}_{\mathcal{S}}\in\ker(F^T)$ because of the regularity of the matrix $(I + \beta S_C)$, with $S_C=F^TC^{-1}F$ (see proof of Theorem \ref{thm:thm1}).
Hence, by setting $\mathcal{Z}=\ran{C^{-1}F}$ equation \eqref{eq:u_M1} simplifies to:
\begin{equation}\label{eq:u_M}
  C \mybold{w}_{\mathcal{S}} = \left ( I - P^T \right ) \mybold{b},
\end{equation}
which no longer requires the inversion of $\hat{C}$.
Observe that the contribution $P^T \vec{b}$ at the right-hand side of \eqref{eq:u_M} reads:
\begin{equation} \label{eq:bZ}
	\hat{C} Z E^{-1} Z^T \vec{b} = \hat{C} \mybold{w}_{\mathcal{Z}},
\end{equation}
i.e., $P^T\vec{b}=\hat{C}\mybold{w}_{\mathcal{Z}}=\vec{b}_{\mathcal{Z}}$ is the projected right-hand side onto $\mathcal{Z}$. 
The solution $\mybold{w}$ to the system \eqref{eq:hat_Cu_b} is therefore given by the following set of equations:
\begin{eqnarray}
 \mybold{w}_{\mathcal{Z}}  &=& Z {E}^{-1} Z^T \mybold{b}, \nonumber \\
  \mybold{w}_{\mathcal{S}}  &=&  {C}^{-1} \left (  \mybold{b} - \mybold{b}_{\mathcal{Z}} \right), \label{eq:syspromet2}  \\
  \mybold{w}_{\;\;} &=& \mybold{w}_{\mathcal{Z}} + \mybold{w}_{\mathcal{S}}. \nonumber 
\end{eqnarray} 
}
The matrix $ZE^{-1}Z^T$ reads:
\begin{eqnarray}
	Z E^{-1} Z^T &=& C^{-1} F \left[ F^T C^{-1} C  C^{-1} F + \beta \left( F^T C^{-1} F F^T C^{-1} F  \right ) 
	\right]^{-1} F^T C^{-1} \nonumber \\
		    &=& C^{-1} F \left[ S_C  + \beta S_C^2 \right]^{-1} F^T C^{-1} \nonumber \\
		    &=& C^{-1} F S_C^{-1} \left[ I + \beta S_C \right]^{-1} F^T C^{-1}. \label{eq:ZEZ}
\end{eqnarray}
Introducing \eqref{eq:ZEZ} in the first equation of \eqref{eq:syspromet2}, we have:
\begin{equation}\label{eq:uz_CF}
 \mybold{w}_{\mathcal{Z}}  = C^{-1} F S_C^{-1} \tilde{S}_C^{-1} F^T C^{-1} \mybold{b},
\end{equation} 
with $\tilde{S}_C= I + \beta S_C.$ The global solution $\mybold{w}$ can be rewritten as:
\begin{eqnarray}\label{eq:u_uz}
\mybold{w} &=& \mybold{w}_{\mathcal{S}} + \mybold{w}_{\mathcal{Z}}= {C}^{-1} \left (  \mybold{b} - \mybold{b}_{\mathcal{Z}} \right) + \mybold{w}_{\mathcal{Z}} \nonumber \\
	&=&  {C}^{-1} \left (  \mybold{b} - \hat{C}  \mybold{w}_{\mathcal{Z}} + C \mybold{w}_{\mathcal{Z}} \right) = {C}^{-1} \left (  \mybold{b} -  \beta F F^T  \mybold{w}_{\mathcal{Z}} \right),
\end{eqnarray}
and using $\mybold{w}_{\mathcal{Z}}$ of equation \eqref{eq:uz_CF} finally yields:
 \begin{equation}\label{eq:precSMW}
  \mybold{w} = {C}^{-1} \left(  \mybold{b} - \beta F F^T \mybold{w}_{\mathcal{Z}} \right)= {C}^{-1} \left (  \mybold{b} - \beta F \tilde{S}_C^{-1} F^T C^{-1} \mybold{b}  \right) .
\end{equation} 

\begin{remark}\label{rem:SMW}
	The result \eqref{eq:precSMW} can be also obtained applying directly the Sherman-Morrison-Woodbury identity 
	to equation \eqref{eq:hat_Cu_b}. Therefore, this relationship can be also regarded as the natural outcome
	of a particular projection operated on the matrix $\hat{C}$.
\end{remark}

The solution of the inner system \eqref{eq:hat_Cu_b} through equation \eqref{eq:precSMW} requires two inner solves with
$C$, which is a well-defined SPD matrix, and one with $\tilde{S}_C$. The latter is not available explicitly, however
it can be easily and inexpensively approximated for $\hat{K}$ and $\hat{A}$ by using, for instance, $D_K$ and $D_A$ 
already adopted to compute $\alpha$ (see Section \ref{sec:RPF}). Hence, an inexact solve with $\tilde{S}_C$ is performed
by using some approximation, say $M_{\tilde{S}}^{-1}$. Of course, also the solves with $C$ can be performed inexactly,
by means of another inner local preconditioner, $M_C^{-1}$, for SPD matrices. Algorithm \ref{alg:met2} summarizes the
procedure resulting from Method 2.

\begin{algorithm}[tbh]
	\caption{Method 2: $\mybold{w}=\mbox{\sc MET2\_Apply}\left(\beta,\vec{b},F,M^{-1}_C,M^{-1}_{\tilde{S}}\right)$}
        \label{alg:met2}
        \begin{algorithmic}[1]
                \State \textbf{Apply} $M^{-1}_C$ to $\mybold{b}$ to get $\mybold{c}$
                \State $\mybold{d}=F^T \mybold{c}$
		\State \textbf{Apply} $M^{-1}_{\tilde{S}}$ to $\mybold{d}$ to get $\mybold{g}$
                \State $\mybold{c} = F \mybold{g}$
		\State $\mybold{c} \gets \mybold{b}-\beta \mybold{c}$
                \State \textbf{Apply} $M^{-1}_C$ to $\mybold{c}$ to get $\mybold{w}$
        \end{algorithmic}
\end{algorithm}

The ERPF2 preconditioner is obtained by replacing lines 3 and 8 of Algorithm \ref{alg:appRPF} with the function
in Algorithm \ref{alg:met2} whenever $\alpha<\alpha_K$ and $\alpha<\alpha_A$, respectively.
{The complexity $\chi_2$ of Algorithm \ref{alg:appRPF} reads:
\begin{equation}
    \chi_2 = 2\chi \left( M^{-1}_C \right) + \chi \left( M^{-1}_{\tilde{S}} \right) + \mathcal{O} \left( n \right),
    \label{eq:comp_met2}
\end{equation}
where $n$ is the number of rows of $F$, and $\chi(M^{-1}_C)$ and $\chi(M^{-1}_{\tilde{S}})$ denote the complexity of the inner preconditioner applications.
}
The overall cost
for the preconditioner application per iteration increases with respect to the native RPF, however the overall
algorithm is expected to take benefit from the acceleration of the global Krylov solver.

\begin{remark}\label{rem:BTP}
	The use of Method 2 for solving the inner systems with both $\hat{K}$ and $\hat{A}$ is equivalent to compute
	the vector $\vec{t}=\blkMat{M}^{-1}\vec{r}$ using directly equation \eqref{eq:RPF}:
	\begin{equation}\label{eq:dirRPF}
		\left\{ \begin{array}{l} 
			\vec{y}=\alpha \blkMat{M}_1^{-1} \vec{r} \\
			\vec{t}=\blkMat{M}_2^{-1} \vec{y}
		\end{array} \right. ,
	\end{equation}
	where $\blkMat{M}_1$ and $\blkMat{M}_2$ are inverted with the aid of the factorizations:
	\begin{eqnarray}
		\blkMat{M}_1 & = & \left[ \begin{array}{ccc}
			K & 0 & -Q \\ 0 & \alpha I_q & 0 \\ Q^T & 0 & \alpha I_p
		\end{array} \right] = \left[ \begin{array}{ccc}
			K & 0 & 0 \\ 0 & \alpha I_q & 0 \\ Q^T & 0 & \tilde{S}_K
		\end{array} \right] \left[ \begin{array}{ccc}
			I_u & 0 & -K^{-1} Q \\ 0 & I_q & 0 \\ 0 & 0 & \alpha I_p
		\end{array} \right], \label{eq:fact_M1} \\
		\blkMat{M}_2 & = & \left[ \begin{array}{ccc}
                        I_u & 0 & 0 \\ 0 & A & -B \\ 0 & \gamma B^T & \alpha I_p
                \end{array} \right] = \left[ \begin{array}{ccc}
                        I_u & 0 & 0 \\ 0 & A & 0 \\ 0 & \gamma B^T & \tilde{S}_A
                \end{array} \right] \left[ \begin{array}{ccc}
			I_u & 0 & 0 \\ 0 & I_q & -A^{-1} B \\ 0 & 0 & \alpha I_p
                \end{array} \right]. \label{eq:fact_M2}
	\end{eqnarray}
	This leads to an alternative ERPF2 implementation, which is naturally more prone to a split preconditioning 
	strategy.
	{The use of the factorizations \eqref{eq:fact_M1}-\eqref{eq:fact_M2} into \eqref{eq:dirRPF} whenever $\alpha<\alpha_K$ and $\alpha<\alpha_A$ provides the same numerical outcome as the application of Algorithm \ref{alg:met2}.}
	Details of such equivalence along with the alternative ERPF2 algorithm are provided in 
	\ref{app:altERPF2}.
\end{remark}

\section{Numerical Results}\label{sec:NumRes}
Two sets of numerical experiments are discussed in this section.
The first set (Test 1) arises from Mandel's problem \cite{Man53}, i.e., a classic benchmark of linear poroelasticity.
This problem is used to verify the theoretical properties of the proposed methods.
In the second set (Test 2), two real-world applications are considered in order to test the robustness and efficiency of the preconditioners. 

We consider three variants of RPF, ERPF1, and ERPF2 according to the selection of the inner preconditioners.
In essence, the first variant ($\blkMat{M}_{\textsc{i}}$) relies on applying ``exactly'' the inner preconditioners using nested direct solvers and aims at investigating the theoretical properties of the different approaches.
The second ($\blkMat{M}_{\textsc{ii}}$) and the third ($\blkMat{M}_{\textsc{iii}}$) variants introduce further levels of approximation utilizing incomplete Cholesky (\textsc{ic}) and algebraic multigrid (\textsc{amg}) preconditioners, respectively.
Specifically: \medskip

\begin{itemize}
	\item \textbf{RPF} (Algorithm \ref{alg:appRPF}): all exact/inexact solves involve $\hat{K}$ in \eqref{eq:Khat} and $\hat{A}$ in \eqref{eq:Ahat} irrespective of the value of $\alpha$;
	\medskip
	\item \textbf{ERPF1} (Algorithm \ref{alg:met1}): Method 1 requires inner solves with matrices $\hat{K}_{\alpha}$ or $\hat{A}_{\alpha}$ that depend on the relaxation parameter $\alpha$ as follows
	\begin{equation}
	  \hat{K}_{\alpha} = 
	  \begin{dcases}
	    \hat{K}_{\ell}, &\text{if } \alpha < \alpha_K   \\
	    \hat{K},        &\text{if } \alpha \ge \alpha_K \\
	  \end{dcases},
	  \qquad	  
	  \hat{A}_{\alpha} = 
	  \begin{dcases}
	    \hat{A}_{\ell}, &\text{if } \alpha < \alpha_A   \\
	    \hat{A},        &\text{if } \alpha \ge \alpha_A \\
	  \end{dcases},
	\end{equation}
	with $\hat{K}_{\ell}$ and $\hat{A}_{\ell}$ computed at $\alpha_K$ and $\alpha_A$, respectively, using expression \eqref{eq:C_ell};
	\medskip
	\item \textbf{ERPF2} (Algorithm \ref{alg:met2}): Method 2 is used to replace the inner solves with $\hat{K}$ or $\hat{A}$ whenever either $\alpha<\alpha_K$ or $\alpha<\alpha_A$. It requires inner solves with $K$ and $\tilde{S}_K$, or $A$ and $\tilde{S}_A$. For $\tilde{S}_K$ we use the diagonal matrix $D_K$ employed in the computation of $\alpha$ and $\alpha_K$	 (see Section \ref{sec:RPF}) by defining:
				\begin{equation} \label{eq:tildeDK}
					\tilde{D}_K = I_p + \frac{1}{\alpha} D_K.
				\end{equation}
				The application of $\tilde{S}_K^{-1}$ is approximated by $\tilde{D}_K^{-1}$. For $\tilde{S}_A$
				we use the diagonal matrix $\tilde{A}^{-1}$ employed in the computation of $\alpha$ and 
				$\alpha_A$ (see Section \ref{sec:RPF}, equation \eqref{eq:tilde_A}) by defining:
                                \begin{equation} \label{eq:tildeSA}
					\tilde{S}_A \simeq I_p + \frac{\gamma}{\alpha} B^T \tilde{A}^{-1} B.
                                \end{equation}
				The inverse of $\tilde{A}$ is used to approximate the application of $A^{-1}$ as well.
\end{itemize}
Table \ref{tab:varPrec} summarizes the different variants illustrated above. For the incomplete Cholesky factorization, 
we use an algorithm implementing a fill-in technique based on the selection of a user-specified row-wise number of 
entries in addition to those of the original matrix, as proposed in \cite{Saa94} and \cite{LinMor99}. A classic algebraic 
multigrid method \cite{RugStu87} is used as implemented in the \texttt{HSL\_MI20} 
package \cite{BoyMihSco10}. Of course, several other algebraic options for the inner blocks are possible.

\begin{table}
\small
\caption{RPF, ERPF1, and ERPF2 preconditioner variants.\label{tab:varPrec}}
\begin{center}
\begingroup
\begin{tabular}{ccccc}
\toprule
Preconditioner &
RPF &
ERPF1 &
\begin{tabular}{@{}c@{}}ERPF2 \\ $\alpha < \alpha_K$ \end{tabular} &
\begin{tabular}{@{}c@{}}ERPF2 \\ $\alpha < \alpha_A$ \end{tabular}  \\
\midrule
$\blkMat{M}^{-1}_\textsc{i}$ &
$\blkMat{M}^{-1}_\textsc{i}(\hat{K}^{-1},\hat{A}^{-1})$ &
$\blkMat{M}^{-1}_\textsc{i}(\hat{K}^{-1}_{\alpha},\hat{A}^{-1}_{\alpha})$ &
$\blkMat{M}^{-1}_\textsc{i}({K}^{-1},\tilde{D}^{-1}_{K},\hat{A}^{-1})$ &
$\blkMat{M}^{-1}_\textsc{i}(\hat{K}^{-1},\tilde{A}^{-1},\tilde{S}_A^{-1})$ \\
$\blkMat{M}^{-1}_\textsc{ii}$ &
$\blkMat{M}^{-1}_\textsc{ii}(\hat{K}^{-1}_\textsc{ic},\hat{A}^{-1}_\textsc{ic})$ &
$\blkMat{M}^{-1}_\textsc{ii}(\hat{K}_{\alpha,\textsc{ic}}^{-1},\hat{A}^{-1}_{\alpha,\textsc{ic}})$ &
$\blkMat{M}^{-1}_\textsc{ii}({K}_\textsc{ic}^{-1},\tilde{D}^{-1}_{K},\hat{A}^{-1}_\textsc{ic})$ &
$\blkMat{M}^{-1}_\textsc{ii}(\hat{K}^{-1}_\textsc{ic},\tilde{A}^{-1},\tilde{S}_{A, \textsc{ic}}^{-1})$ \\
$\blkMat{M}^{-1}_\textsc{iii}$ &
$\blkMat{M}^{-1}_\textsc{iii}(\hat{K}^{-1}_\textsc{amg},\hat{A}^{-1}_\textsc{amg})$ &
 -- &
$\blkMat{M}^{-1}_\textsc{iii}({K}_\textsc{amg}^{-1},\tilde{D}^{-1}_{K},\hat{A}^{-1}_\textsc{amg})$ &
$\blkMat{M}^{-1}_\textsc{iii}(\hat{K}^{-1}_\textsc{amg},\tilde{A}^{-1},\tilde{S}_{A, \textsc{amg}}^{-1})$ \\
\bottomrule
\end{tabular}
\endgroup
\end{center}
\end{table}

{The objective of this section is to demostrate numerically that the proposed enhancements ERPF1 and ERPF2 overcome the drawbacks of the native RPF preconditioner. A comparison with other well-established block preconditioners, which has already been done for RPF in~\cite{FriCasFer19}, is not performed here. 
Test 1 shows the stability and robustness of ERPF1 and ERPF2 to a variation of the mesh and timestep size for tightly coupled poromechanical conditions. Tests 2 give the performance in real-world heterogeneous problems, where the robustness to significant jumps in the hydromechanical parameters and the computational efficiency measured in terms of CPU time reduction are the main issues.  
}

In all test cases, Bi-CGStab \cite{Vor92} is selected as a Krylov subspace method. 
{Our focus is the robustness and computational efficiency in the solution of a single linear system. 
Hence, following standard practice \cite{Hac16}, all tests are performed by setting a random solution vector $\Vec{x}$ and computing the corresponding right-hand side $\Vec{F} = \blkMat{A} \Vec{x}$.}
The iterations are stopped when the 
ratio between the 2-norm of the real residual vector and the 2-norm of the right-hand side
is smaller than $\tau=10^{-6}$. 
The computational performance is evaluated in terms of number of iterations $n_{it}$, 
CPU time in seconds for the preconditioner set-up $T_p$ and for the solver to converge $T_s$. The total time is 
denoted by $T_t = T_p + T_s$. All computations are performed using a code written in FORTRAN90 on an Intel(R) Xeon(R) 
CPU E5-1620 v4 at 3.5 GHz Quad-Core with 64 GB of RAM memory.

\subsection{Test1: Mandel's problem}\label{subsec:Man1}
This is a classical benchmark for validating coupled poromechanical models. {The problem consists of a porous slab with section $2a\times2b$,
discretized by a cartesian grid, sandwiched between rigid, frictionless, and impermeable plates (Figure \ref{fig:Man}). For the sake of simplicity, we set $a=b$}. 
A detailed description of the test case is reported in \cite{CasWhiFer16,FriCasFer19}. This test case has 
mainly a theoretical value and is used to investigate the properties of ERPF1 and ERPF2 for a wide range of time step, 
$\Delta t$, and characteristic mesh size, $h$, values. In particular, four grids progressively refined are considered 
as shown in Table \ref{tab:Mangrids}. 
{
The time-step size is defined with respect to the characteristic consolidation time $t_c$ of the problem. This allows for testing the solver behavior at any stage of the consolidation process independently of the specific material parameters, i.e., the medium compressibility and permeability. Hence, the analysis carried out in this test case provides also information about the parameter-robustness of the proposed algorithms.
}


\begin{figure}
  \hfill
  \begin{subfigure}[t]{0.22\linewidth}
    \centering
    \includegraphics[width=\linewidth]{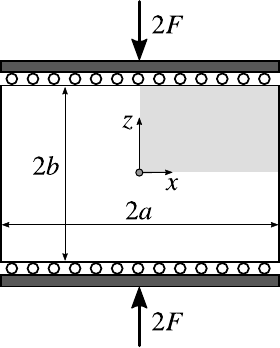}
    \caption{}
  \end{subfigure}
  \hfill
  \begin{subfigure}[t]{0.3\linewidth}
    \centering
    \includegraphics[width=\linewidth]{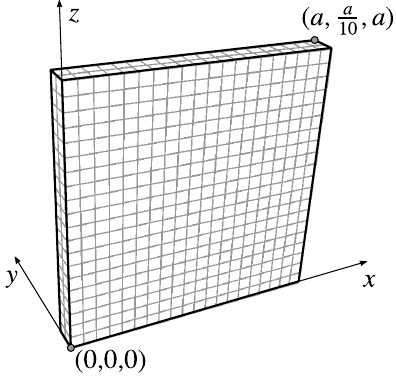}
    \caption{}
  \end{subfigure}
  \hfill\null
  \caption{Test 1, Mandel's problem: sketch of the geometry (a) and the computational model (b).}
  \label{fig:Man}
\end{figure}


\begin{table}
  \small
  \caption{Test 1, Mandel's problem: grid refinement and problem size. 
  }\label{tab:Mangrids}
  \begin{center}
    \begin{tabular}{ c c r r r }
      \toprule
      \multicolumn{1}{c}{$a/ h$}& \begin{tabular}{@{}c@{}} number of  \\ elements \end{tabular} &  \multicolumn{1}{r}{$n_u$}&  \multicolumn{1}{r}{$n_q$}& 
      \multicolumn{1}{r}{$n_p$}\\ 
      \midrule
      10 &  $10 \times 1 \times 10$ &    726 &    420 &   100\\ 
      20 &  $20 \times 2 \times 20$ &   3,969 &   2,880 &   800\\ 
      40 &  $40 \times 4 \times 40$ &  25,215 &  21,120 &  6,400\\ 
      80 &  $80 \times 8 \times 80$ & 177,147 & 161,280 & 51,200\\ 
      \bottomrule
    \end{tabular} 
  \end{center}  
\end{table}

\begin{figure}
  \hfill
  \begin{subfigure}[b]{0.48\textwidth}
    \centering
    \includegraphics[scale=1]{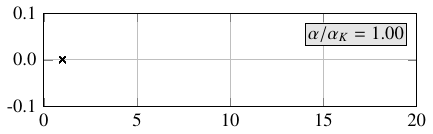}
    \includegraphics[scale=1]{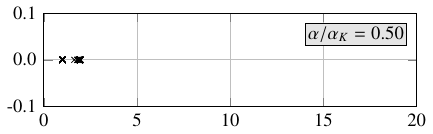}
    \includegraphics[scale=1]{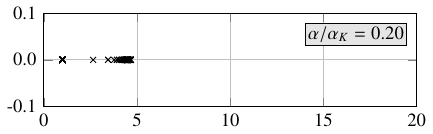}
    \includegraphics[scale=1]{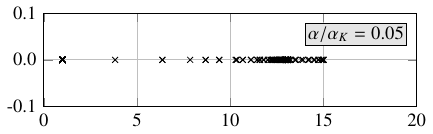}
    \caption{}
    \label{fig:KeigThe}
  \end{subfigure}
  \hfill
  \begin{subfigure}[b]{0.48\textwidth}
    \centering
    \includegraphics[scale=1]{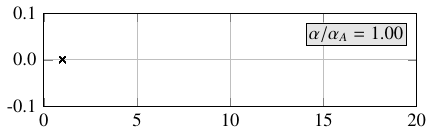}
    \includegraphics[scale=1]{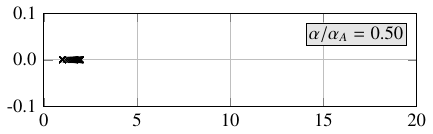}
    \includegraphics[scale=1]{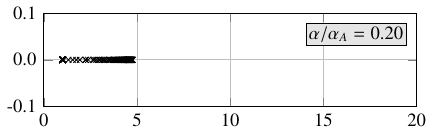}
    \includegraphics[scale=1]{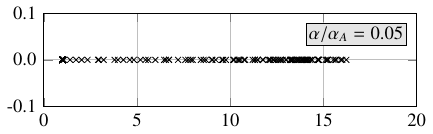}
     \caption{}
     \label{fig:AeigThe}
   \end{subfigure}
   \hfill\null
  \caption{Test 1, Mandel's problem ($a/h=10$): eigenspectrum of $\hat{K}_\ell^{-1} \hat{K}$ and $\hat{A}_\ell^{-1} \hat{A}$ 
  varying $\alpha/\alpha_K$ (a) and $\alpha/\alpha_A$ (b).}
  \label{fig:eig_spe}
\end{figure}

First of all, we analyze how the eigenspectrum of $\hat{K}_\ell^{-1} \hat{K}$ and $\hat{A}_\ell^{-1} \hat{A}$ changes 
with the ratios $\alpha/\alpha_K$ and $\alpha/\alpha_A$, respectively. Figure \ref{fig:eig_spe} provides such 
eigenspectra for the coarsest discretization ($a/h=10$). 
As the separation between $\alpha$ and the limiting values
$\alpha_K$ and $\alpha_A$ increases, the largest eigenvalue progressively grows and the eigenspectrum is less and less
clustered, as theoretically predicted by the result of Theorem \ref{thm:thm1}. 
{This is basically due to the increase of the condition number of $\hat{K}$ and $\hat{A}$ with respect to that of $K$ and $A$, respectively. The numerical values for the cases reported in Figure \ref{fig:eig_spe} are provided in Table \ref{tab:illcond}, while the general behavior obtained varying $\alpha$ is shown in Figure \ref{fig:illcond}. In other words, $\hat{K}$ and $\hat{A}$ are getting closer and closer to a singular matrix as $\alpha\rightarrow 0$, and consequently} 
the global RPF 
performance is expected to get worse. The application of Method 1 and Method 2 allows to overcome such an issue. 

\begin{table}
  \small
  \caption{{Test 1, Mandel's problem ($a/h=10$): condition number $\kappa$ of $\hat{K}$ and $\hat{A}$ varying $\alpha/\alpha_{\ell}$, $\alpha_{\ell}$ being either $\alpha_K$ or $\alpha_A$. The condition number of $K$ and $A$ are: $\kappa(K)=2.14\cdot10^5$, $\kappa(A)=9.84\cdot10^9$.}}\label{tab:illcond}
  \begin{center}
    \begin{tabular}{ r r r }
      \toprule
      {$\alpha/\alpha_{\ell}$} & {$\kappa(\hat{K})$} & {$\kappa(\hat{A})$} \\ 
      \midrule
       {1.00} &  {$8.47\cdot10^6$}  &  {$1.21\cdot10^{12}$} \\
       {0.50} &  {$1.67\cdot10^7$}  &  {$2.42\cdot10^{12}$} \\
       {0.20} &  {$4.15\cdot10^7$}  &  {$6.05\cdot10^{12}$} \\
       {0.05} &  {$1.65\cdot10^8$}  &  {$2.41\cdot10^{13}$} \\
      \bottomrule
    \end{tabular} 
  \end{center}  
\end{table}

\begin{figure}
    \centering
    \includegraphics[scale=1]{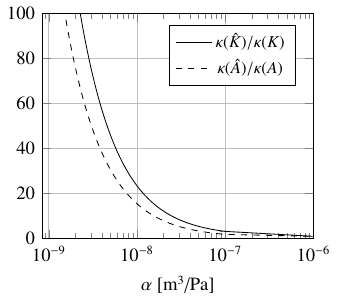}
    \caption{{Test 1, Mandel's problem ($a/h=10$): variation of the condition number $\kappa$ of $\hat{K}$ and $\hat{A}$ with respect to that of $K$ and $A$, respectively, vs $\alpha$.}}
    \label{fig:illcond}
\end{figure}

The preconditioner variant $\blkMat{M}^{-1}_{\textsc{i}}$ uses nested direct solvers, hence it has mainly a theoretical value.
It is useful to isolate the impact of the proposed schemes (Algorithms \ref{alg:met1} and \ref{alg:met2}) on the 
convergence behavior by varying the discretization steps in both space and time. We recall on passing that $\gamma=
\theta\Delta t$, with $\theta\in[0.5,1]$, and that $\alpha$ is proportional to $\sqrt{\gamma}$, $\alpha_K$ is constant
with $\gamma$, and $\alpha_A$ depends linearly on $\gamma$ (see Section \ref{sec:RPF}). Hence: 
\begin{equation}\label{eq:limDt}
	\lim_{\Delta t \rightarrow 0} \frac{\alpha}{\alpha_K} = 0, \qquad \lim_{\Delta t \rightarrow +\infty} 
	\frac{\alpha}{\alpha_A} = 0,
\end{equation}
i.e., the conditions $\alpha<\alpha_K$ and $\alpha<\alpha_A$ are encountered for small and large time-step sizes,
respectively, and are not likely to be satisfied simultaneously.

\begin{table}
\caption{Test 1, Mandel's problem: number of outer iteration with ERPF1 and $\blkMat{M}^{-1}_{\textsc{i}}$ by varying the 
	mesh-size $h$, the time-step $\Delta t$ and the inner iterations $n_{in}$. The time-step size is relative
	to the characteristic consolidation time $t_c=900$ s \cite{CasWhiFer16,FriCasFer19}.}
\small
\begin{center}
\begin{tabular}{lcccccccc}
\toprule
	\multicolumn{1}{c}{$a/ h$}&  \multicolumn{1}{c}{$\Delta t/ t_c$} & \multicolumn{1}{c}{$\alpha / \alpha_K$} & \multicolumn{1}{c}{$\alpha / \alpha_A$}&  \multicolumn{5}{c}{\# of outer iterations } \\
      \cmidrule{5-9}
	& & & & \multicolumn{1}{c}{$n_{in}={1}$}&\multicolumn{1}{c}{$n_{in}={2}$}&\multicolumn{1}{c}{$n_{in}={3}$}&\multicolumn{1}{c}{$n_{in}={4}$}&\multicolumn{1}{c}{$n_{in}={5}$}\\
      \midrule
	   &$5.6 \cdot 10^{-7}$ & 0.10 & $> 1$ & 12&9&7&5&3\\ 
	   &$3.4 \cdot 10^{-6}$ & 0.25 & $> 1$ & 10&7&4&3&3\\ 
	   &$1.7 \cdot 10^{-4}$ & 0.50 & $> 1$ & 7&4&4&3&3\\ 
	10 &$1.0 \cdot 10^{-3}$ & $> 1$  & $> 1$ & 8&--&--&--&--\\
	   &$5.0 \cdot 10^{-1}$ & $> 1$  & 0.50 &14&10&8&8&8\\ 
	   &$1.7 \cdot 10^{+0}$  & $> 1$  & 0.25 &22&12&10&9&8\\ 
	   &$5.6 \cdot 10^{+0}$  & $> 1$  & 0.10 &24&15&11&11&10\\ 
           &&&&&&\\    
	   &$5.6 \cdot 10^{-7}$ & 0.10 & $> 1$ & 12&9&6&4&3\\
	   &$3.4 \cdot 10^{-6}$ & 0.25 & $> 1$ & 9&7&4&3&2\\ 
	   &$1.7 \cdot 10^{-4}$ & 0.50 & $> 1$ & 7&4&3&3&3\\
	20 &$1.0 \cdot 10^{-3}$ & $> 1$  & $> 1$ & 7&--&--&--&--\\
	   &$5.0 \cdot 10^{-1}$ & $> 1$  & 0.50 &17&12&12&11&11\\
	   &$1.7 \cdot 10^{+0}$  & $> 1$  & 0.25 &29&16&13&13&12\\
	   &$5.6 \cdot 10^{+0}$  & $> 1$  & 0.10 &38&22&16&14&13\\
      \bottomrule
\end{tabular}
\end{center}
\label{tab:ManMet1}
\end{table}

Table \ref{tab:ManMet1} provides the outer iteration count obtained in Mandel's problem with ERPF1 by varying the mesh-size, 
the time-step and the number of inner iterations $n_{in}$. As expected, the number of outer iterations decreases when 
growing the number of inner iterations. Since the computational cost for ERPF1 application increases with $n_{in}$,
setting $n_{in}=2$ or $n_{in}=3$ appears to be already a good trade-off between solver acceleration and computational cost per
iteration.

The ERPF1 effectiveness is actually dependent on the value of $\alpha/\alpha_K$ or $\alpha/\alpha_A$. Lemma \ref{lem:m1}
shows that the convergence rate of the inner stationary method used with $\hat{K}$ or $\hat{A}$ tends to 0 with the
ratio $\alpha/\alpha_K$ or $\alpha/\alpha_A$. This is confirmed by Table \ref{tab:ManMet1}, which shows a performance
deterioration for both $\alpha/\alpha_K$ and $\alpha/\alpha_A$ approaching 0. Such a deterioration, however, appears to
be much more pronounced with $\alpha/\alpha_A$, i.e., for large values of the time-step $\Delta t$ {(see Remark \ref{rem:speed_beta})}. Table \ref{tab:ManMet1}
shows also a mild dependency on the mesh-size $h$, with the outer iteration count increasing moderately only for
large $\Delta t$ values.

\begin{table}
\caption{Test 1, Mandel's problem:  number of outer iterations with ERPF2 and $\blkMat{M}^{-1}_{\textsc{i}}$ by varying the mesh-size $h$ and the time-step $\Delta t$.\label{tab:ManMet2}}
\small
\begin{center}
\begin{tabular}{lccccccc}
\toprule
	\multicolumn{1}{c}{$a/ h$} &  \multicolumn{1}{c}{$\Delta t / t_c$}&  \multicolumn{1}{c}{$\alpha/\alpha_K$} &\multicolumn{1}{c}{\# of iter.}& &  \multicolumn{1}{c}{$\Delta t / t_c$}&  \multicolumn{1}{c}{$\alpha/\alpha_A$} &\multicolumn{1}{c}{\# of iter.}\\
      \midrule
	   &$10^{-6}$&$1.3 \cdot 10^{-1}$&5&&$10^{2}$&$3.5 \cdot 10^{-2}$&8 \\
	10 &$10^{-7}$&$4.3 \cdot 10^{-2}$&6&&$10^{3}$&$1.1 \cdot 10^{-2}$&8 \\
	   &$10^{-8}$&$1.4 \cdot 10^{-2}$&6&&$10^{4}$&$3.5 \cdot 10^{-3}$&7\\
	   &&&&&&&\\                                                      
	   &$10^{-6}$&$3.0 \cdot 10^{-1}$&6&&$10^{2}$&$1.9 \cdot 10^{-2}$&8 \\
	20 &$10^{-7}$&$9.5 \cdot 10^{-2}$&6&&$10^{3}$&$6.0 \cdot 10^{-3}$&7 \\
	   &$10^{-8}$&$3.0 \cdot 10^{-2}$&6&&$10^{4}$&$1.9 \cdot 10^{-3}$&7 \\
	   &&&&&&&\\                                                      
	   &$10^{-6}$&$1.9 \cdot 10^{-1}$&5&&$10^{2}$&$9.5 \cdot 10^{-3}$&9\\
	40 &$10^{-7}$&$6.0 \cdot 10^{-2}$&6&&$10^{3}$&$3.0 \cdot 10^{-3}$&7\\
	   &$10^{-8}$&$1.8 \cdot 10^{-2}$&6&&$10^{4}$&$9.5 \cdot 10^{-4}$& 7\\
	   &&&&&&&\\                       
	   &$10^{-6}$&$7.9 \cdot 10^{-1}$&6&&$10^{2}$&$6.2 \cdot 10^{-3}$&11\\
	80 &$10^{-7}$&$2.5 \cdot 10^{-1}$&6&&$10^{3}$&$1.9 \cdot 10^{-3}$&8\\
	   &$10^{-8}$&$8.0 \cdot 10^{-2}$&6&&$10^{4}$& $6.2 \cdot 10^{-4}$& 7\\
      \bottomrule
\end{tabular}
\end{center}
\end{table}

A similar analysis is carried out for ERPF2 with the $\blkMat{M}^{-1}_{\textsc{i}}$ variant. Table \ref{tab:ManMet2} provides the results 
in terms of outer iteration count by varying $\Delta t$ and $h$. This approach turns out to be stable with respect to
variations of both $\Delta t$ and $h$, also for extreme values of the time-step size. A comparison with Table
\ref{tab:ManMet1} reveals also that EPRF2 appears to be more effective than EPRF1, as far as the number of outer
iterations is concerned, especially when $\alpha<\alpha_A$, i.e., large time steps. 
{For this reason, we can conclude that ERPF2 should be usually preferred as a full poromechanical simulation proceeds towards steady-state conditions.}

\begin{figure}
  \hfill
  \begin{subfigure}[t]{0.475\linewidth}
    \centering
    \includegraphics[scale=1]{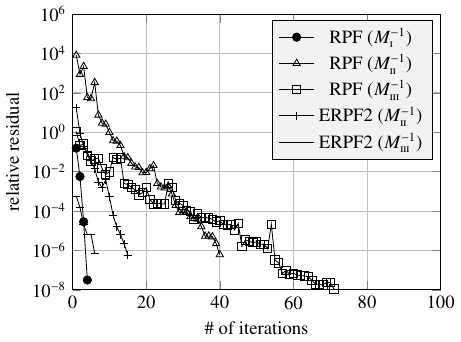} 
    \caption{}
  \end{subfigure}
  \hfill
  \begin{subfigure}[t]{0.475\linewidth}
    \centering    
    \includegraphics[scale=1]{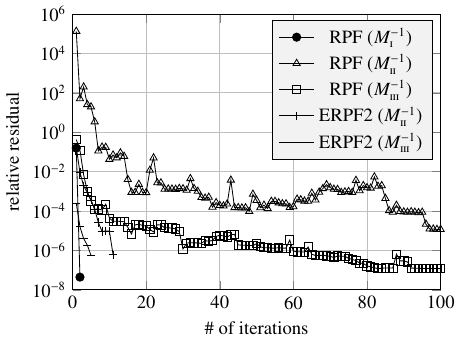}
    \caption{}
  \end{subfigure}
  \hfill\null 
	\caption{Test 1, Mandel's problem: Convergence profiles for $\Delta t = 10^{2}t_c$ (a) and $\Delta t = 10^{6}t_c$ 
	(b) with $a/h=10$.} 
  \label{fig:CPMan10}
\end{figure}

\begin{figure}
  \hfill
  \begin{subfigure}[t]{0.475\linewidth}
    \centering
    \includegraphics[scale=1]{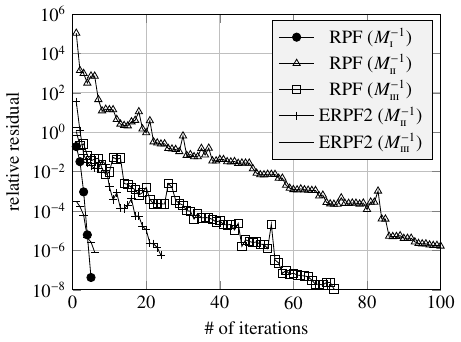}
    \caption{}
  \end{subfigure}
  \hfill
  \begin{subfigure}[t]{0.475\linewidth}
    \centering    
    \includegraphics[scale=1]{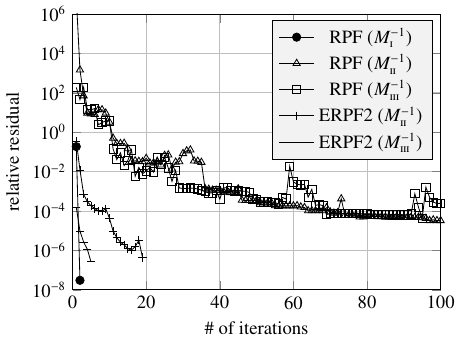}
    \caption{}
  \end{subfigure}
  \hfill\null 
	\caption{Test 1, Mandel's problem: the same as Figure \ref{fig:CPMan10} for $a/h=20$.}
  \label{fig:CPMan20}
\end{figure}

Of course, the results of Table \ref{tab:ManMet1} and \ref{tab:ManMet2} can change when the nested direct solvers
are replaced by inner preconditioners, such as with {the variants $\blkMat{M}^{-1}_{\textsc{ii}}$ and $\blkMat{M}^{-1}_{\textsc{iii}}$}. For example, Figure \ref{fig:CPMan10} 
and \ref{fig:CPMan20} compare the convergence profile for $a/h=10$ and $a/h=20$, respectively, and very large values 
of the time step ($\Delta t = 10^{2}t_c$ and $\Delta t = 10^{6}t_c$) obtained with the original RPF ({$\blkMat{M}^{-1}_{\textsc{i}}$, $\blkMat{M}^{-1}_{\textsc{ii}}$, 
and $\blkMat{M}^{-1}_{\textsc{iii}}$ variants}) and with EPRF2 ({$\blkMat{M}^{-1}_{\textsc{ii}}$ and $\blkMat{M}^{-1}_{\textsc{iii}}$ variants}). {All incomplete Cholesky factorizations
are computed in this case with zero fill-in, while algebraic multigrid is applied keeping the default parameters. It can be observed that, even in this academic example, the original RPF
preconditioner with an inexact solve for $\hat{K}$ or $\hat{A}$ might not converge with both $\blkMat{M}^{-1}_{\textsc{ii}}$ and $\blkMat{M}^{-1}_{\textsc{iii}}$ variants. Hence, this outcome appears to be an issue related to the nature of $\hat{K}$ and $\hat{A}$, rather than the choice of the inner preconditioner. By
distinction, the ERPF2 performance is just moderately affected by the use of inexact solves, exhibiting a very stable convergence profile with respect to a $\Delta t$ variation.} Hence, the
proposed algorithms are able to enhance not only the RPF performance, but also its robustness. 

The use of incomplete Cholesky factorizations with a partial fill-in as inner preconditioners of SPD blocks is efficient
in sequential computations, but can create concerns in parallel environments. Moreover, as it is well-known, it can
prevent from obtaining an optimal weak scalability with respect to the mesh size $h$. 
{For instance, this can be observed
in Figure \ref{fig:CPMan10} and \ref{fig:CPMan20}, ERPF2 profiles, where the number of iterations is not
constant with $h$ by using incomplete Cholesky factorizations.} The scalability
with $h$ can be restored by using scalable inner preconditioners, such as algebraic multigrid methods. 
The same analysis as Table \ref{tab:ManMet2} is here carried out by using the $\blkMat{M}_{\textsc{iii}}^{-1}$ variant.
Table \ref{tab:ManMet2_AMG} provides the outer iteration counts by varying $\Delta t$ and $h$.
As expected, the scalability with $h$ is much improved, 
since the number of iterations is quite stable between the progressive refinements.

\begin{table}
\caption{Test 1, Mandel's problem:  number of outer iterations with ERPF2 and $\blkMat{M}^{-1}_{\textsc{iii}}$ by varying the mesh-size
	$h$ and the time-step $\Delta t$.\label{tab:ManMet2_AMG}}
\small
\begin{center}
\begin{tabular}{lccccccc}
\toprule
	\multicolumn{1}{c}{$a/ h$} &  \multicolumn{1}{c}{$\Delta t / t_c$}&  \multicolumn{1}{c}{$\alpha/\alpha_K$} &\multicolumn{1}{c}{\# of iter.}& &  \multicolumn{1}{c}{$\Delta t / t_c$}&  \multicolumn{1}{c}{$\alpha/\alpha_A$} &\multicolumn{1}{c}{\# of iter.}\\
      \midrule
	   &$10^{-6}$&$1.3 \cdot 10^{-1}$&14 &&$10^{2}$&$3.5 \cdot 10^{-2}$&12 \\
	10 &$10^{-7}$&$4.3 \cdot 10^{-2}$&14 &&$10^{3}$&$1.1 \cdot 10^{-2}$&14 \\
	   &$10^{-8}$&$1.4 \cdot 10^{-2}$&14 &&$10^{4}$&$3.5 \cdot 10^{-3}$&14 \\
	   &&&&&&&\\   
	   &$10^{-6}$&$3.0 \cdot 10^{-1}$&19 &&$10^{2}$&$1.9 \cdot 10^{-2}$&13  \\
	20 &$10^{-7}$&$9.5 \cdot 10^{-2}$&19 &&$10^{3}$&$6.0 \cdot 10^{-3}$&12  \\
	   &$10^{-8}$&$3.0 \cdot 10^{-2}$&19 &&$10^{4}$&$1.9 \cdot 10^{-3}$&14  \\
	   &&&&&&&\\ 
	   &$10^{-6}$&$1.9 \cdot 10^{-1}$&22 &&$10^{2}$&$9.5 \cdot 10^{-3}$&11   \\
	40 &$10^{-7}$&$6.0 \cdot 10^{-2}$&19 &&$10^{3}$&$3.0 \cdot 10^{-3}$&12   \\
	   &$10^{-8}$&$1.8 \cdot 10^{-2}$&17 &&$10^{4}$&$9.5 \cdot 10^{-4}$&11    \\
	   &&&&&&&\\    
	   &$10^{-6}$&$7.9 \cdot 10^{-1}$&12 &&$10^{2}$&$6.2 \cdot 10^{-3}$&16 \\
	80 &$10^{-7}$&$2.5 \cdot 10^{-1}$&19 &&$10^{3}$&$1.9 \cdot 10^{-3}$&14 \\
	   &$10^{-8}$&$8.0 \cdot 10^{-2}$&19 &&$10^{4}$& $6.2 \cdot 10^{-4}$&11 \\
      \bottomrule
\end{tabular}
\end{center}
\end{table}

\subsection{Test2: Real-world applications}
The ERPF performance is finally tested in two large-size challenging cases, with unstructured grids and severe material anisotropy and
heterogeneity. 
{
In particular, the solver performance is investigated in presence of strong variability of both mechanical, i.e. compressibility, and hydraulic, i.e. permeability and porosity, material parameters, with severe jumps occurring between adjacent cells. This condition poses particular challenges as to the algorithmic robustness of the proposed methods.
}
We have considered two real-world applications:
\begin{itemize}
	\item Test 2a: Chaobai. This model is used to predict land subsidence due to a shallow multi-aquifer system 
		over-exploitation in the Chaobai River alluvial fan, North of Beijing, China \cite{FerGazCasTeaZhu17}. 
		The strong heterogeneity of the alluvial fan, which covers an overall areal extent of more than 1,100 km$^2$, 
		is accounted for by means of a statistical inverse framework in a multi-zone transition probability approach
		\cite{ZhuDaiGonGabTea16,ZhuFraGonFerDaiKeLiWanTea19}. Figure \ref{fig:chaobai_a} shows a reconstruction of the 
		lithofacies distribution. Details on the model 
		implementation are provided in \cite{FerGazCasTeaZhu17};
	\item Test 2b: SPE10. This is a typical petroleum reservoir engineering application reproducing a well-driven flow 
		in a deforming porous medium. The model setup is based on the 10\textsuperscript{th} SPE Comparative 
		Solution Project \cite{ChrBlu01}, a well-known severe benchmark in reservoir applications, assuming
		a poroelastic mechanical behavior with incompressible fluid and solid constituents. 
		{The porous medium is populated with homogeneous elastic properties, namely, Young's modulus $E = 8.3\cdot10^3$ MPa, Poisson's ratio $\nu =
0.3$, and Biot's coefficient $b = 1.0$.}
		The model is limited
		to the top 35 layers, which are representative of a shallow-marine Tarbert formation characterized by 
		extreme permeability variations (Figure \ref{fig:SPE10perm_b}), covering an areal extent of $365.76 \times 650.56$ 
		m\textsuperscript{2} and for $21.34$ m in the vertical direction. One injector and one production well, 
		located at opposite corners of the domain, penetrate vertically the entire reservoir and drive the
		porous fluid flow. The reader can refer to \cite{CheHuaMa06} for additional details.
\end{itemize}
The size and the number of non-zeros of the matrices arising from Test 2a and 2b are provided in Table \ref{tab:test2}.

\begin{figure}
  \centering
  \begin{subfigure}[t]{0.47\linewidth}
    \centering
    \includegraphics[width=\linewidth]{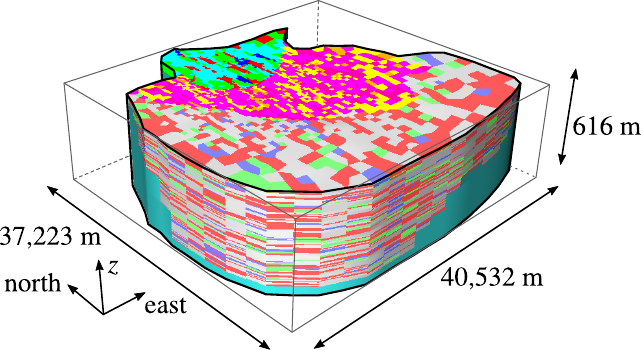}
    \caption{Test 2a, Chaobai: lithofacies heterogeneous distribution.}
    \label{fig:chaobai_a}
  \end{subfigure}
  \hfill
  \begin{subfigure}[t]{0.47\linewidth}
    \centering
    \includegraphics[width=\linewidth]{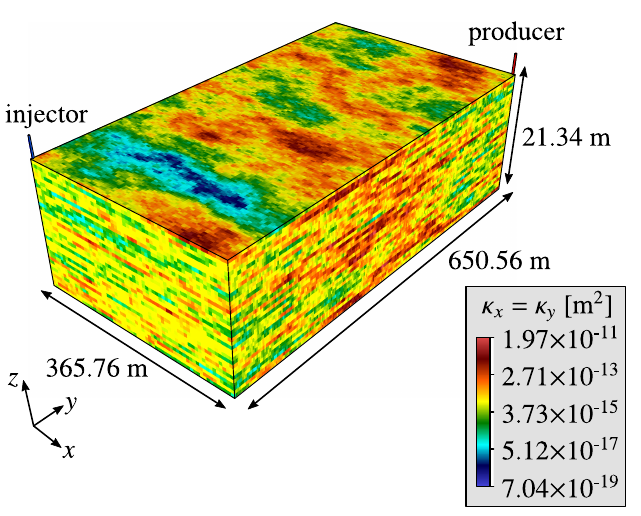}
    \caption{Test 2b, SPE10: horizontal permeability $\kappa_{x} = \kappa_{y} $.}
    \label{fig:SPE10perm_b}
  \end{subfigure}
  \caption{Test 2, Real-world applications: heterogeneous property distribution in the porous domain for the Chaobai (a) and SPE10 (b) test cases.}
  \label{fig:test2}
\end{figure}

\begin{table}
  \caption{Test 2, real-world applications: size and number of non-zeros of the test matrices.}\label{tab:test2}
  \small
  \begin{center}
    \begin{tabular}{ l l l l r }     
      \toprule
      \multicolumn{1}{c}{Test}&  \multicolumn{1}{c}{$n_u$}&  \multicolumn{1}{c}{$n_q$}& 
      \multicolumn{1}{c}{$n_p$}& \multicolumn{1}{c}{\# non-zeros} \\
      \midrule  
      2a: Chaobai  & 2,152,683 & 2,132,612 &   707,600 & 143,359,342 \\
      2b: SPE10    & 1,455,948 & 1,409,000 &   462,000 &  94,317,731 \\
      \bottomrule
    \end{tabular} 
  \end{center}
\end{table}

The size of the real-world problems addressed in Test 2 prevents from the use of the $\blkMat{M}_{\textsc{i}}^{-1}$ variant with
nested direct solvers. Therefore, we compare the performance of the original RPF with ERPF1 and ERPF2 in the
$\blkMat{M}_{\textsc{ii}}^{-1}$ variant, which makes use of incomplete Cholesky factorizations with partial fill-in as inner
preconditioners. We employ a limited-memory implementation \cite{LinMor99}, where the user can specify the row-wise
number of entries $\rho$ to be retained in addition to the row-wise number of non-zeros of the original matrix.
{The parameters of the inner preconditioners are set up to minimize the total CPU time $T_t$ with the RPF preconditioner and intermediate values of the time-step size $\Delta t$ ($10^{-4} < \Delta t < 10^{0}$ day for Test 2a and $10^{-6} < \Delta t < 10^{0}$ day  for Test 2b).
}

\begin{table}
\caption{Test 2a, Chaobai: iteration count and CPU times for a variable time-step size by using the $\blkMat{M}^{-1}_{\textsc{ii}}$ 
	variant as a preconditioner (Table \ref{tab:varPrec}). The fill-in parameter for: $\hat{K}_{\textsc{ic}}^{-1}$, 
	$\hat{K}_{\ell,\textsc{ic}}^{-1}$, and $K_{\textsc{ic}}^{-1}$ is $\rho_K=60$; $\hat{A}_{\textsc{ic}}^{-1}$ and $\hat{A}_{\ell,\textsc{ic}}^{-1}$ is 
	$\rho_A=50$; $\tilde{S}_{A,\textsc{ic}}^{-1}$ is $\rho_S=40$. The number of inner iteration for ERPF1 is 2.\label{tab:chaobaiMii}}
\small
\begin{center}
\begin{tabular}{cccccccccccccc}     
\toprule
	&&&\multicolumn{3}{c}{{ RPF}} &&\multicolumn{3}{c}{{ ERPF1}} && \multicolumn{3}{c}{{ ERPF2}}\\
\cmidrule{4-6} \cmidrule{8-10} \cmidrule{12-14}
	$\Delta t$ [day] &$\alpha/\alpha_K$&$\alpha/\alpha_A$&$n_{it}$ & $T_s$ [s] &$T_t$ [s] && $n_{it}$ & $T_s$ [s] &$T_t$ [s] &&$n_{it}$ & $T_s$ [s] &$T_t$ [s]\\  
\midrule
	$10^{-6}$ &$1.0 \cdot 10^{-2}$&$> 1$ &     49&  124.9 &  203.9 &&  25 &  102.2 &  181.4 &&  58 &216.0&286.6\\
	$10^{-5}$ &$2.8 \cdot 10^{-2}$&$> 1$ &     45&  114.2 &  192.4 &&  32 &  133.8 &  212.9 &&  54 &201.4&271.8\\
	$10^{-4}$ &$9.1 \cdot 10^{-2}$&$> 1$ &     45&  114.7 &  193.1 &&  29 &  118.2 &  197.2 &&  46 &170.1&240.1\\
	$10^{0}$ &$> 1$&$2.8 \cdot 10^{-1} $ &    247&  629.7 &  709.1 && 179 &  582.9 &  661.4 && 134 &274.4&334.0\\
	$10^{1}$ &$> 1$&$9.4 \cdot 10^{-2} $ &$>500$ &      - &      - && 273 &  884.0 &  962.4 && 127 &260.3&321.2\\
	$10^{2}$ &$> 1$&$2.8 \cdot 10^{-2} $ &$>500$ &      - &      - && 386 & 1247.5 & 1326.1 && 135 &277.4&337.2\\
\bottomrule
\end{tabular}
\end{center}
\end{table}

Table \ref{tab:chaobaiMii} provides the results obtained for Test 2a in terms of iteration count, solver CPU time $T_s$ 
and total CPU time $T_t$. For small $\Delta t$ values, i.e., when $\alpha<\alpha_K$, ERPF1 outperforms ERPF2, 
providing smaller iteration counts and total CPU times. In this test case, ERPF1 turns out to be comparable with the
original RPF, which generally proves, however, less robust. As $\Delta t$ increases, the iteration count with RPF
quickly grows and soon becomes unacceptable. With ERPF1, the iterations to converge increase as well, though at a 
lesser extent, while they remain stable when using ERPF2. In other words, inspection of Table \ref{tab:chaobaiMii}
reveals that, in a full transient problem where the time-step size typically increases as the simulation proceeds
towards the steady state, the most efficient strategy consists of switching from ERPF1 when $\alpha<\alpha_K$ to
ERPF2 when $\alpha<\alpha_A$, preserving the original RPF for the intermediate steps. 

\begin{table}
	\caption{Test 2b, SPE10: the same as Table \ref{tab:chaobaiMii} for RPF and ERPF2. The fill-in parameter for: 
	$\hat{K}_{\textsc{ic}}^{-1}$ is $\rho_K=30$; $\hat{A}_{\textsc{ic}}^{-1}$ is $\rho_A=40$; $\tilde{S}_{A,\textsc{ic}}^{-1}$ is $\rho_S=40$.
\label{tab:SPE10Mii}}
\small
\begin{center}
\begin{tabular}{lccccccccc}     
\toprule
	&&&\multicolumn{3}{c}{{ RPF}}&&\multicolumn{3}{c}{{ ERPF2}}\\
\cmidrule{4-6} \cmidrule{8-10} 
	$\Delta t$ [day] &$\alpha/\alpha_K$&$\alpha/\alpha_A$&$n_{it}$ & $T_s$ [s] &$T_t$ [s] && $n_{it}$& $T_s$ [s] &$T_t$ [s] \\  
 \midrule
	$10^{0}$ &$> 1$&$9.2 \cdot 10^{-4}$ &     348 & 467.7 & 503.3 && 84 & 79.8 & 102.2\\
	$10^{1}$ &$> 1$&$2.9 \cdot 10^{-4}$ & $>1000$ &     - &     - && 87 & 82.7 & 104.5\\
	$10^{2}$ &$> 1$&$9.2 \cdot 10^{-5}$ & $>1000$ &     - &     - && 47 & 44.8 &  67.3\\
\bottomrule
\end{tabular}
\end{center}
\end{table}

Table \ref{tab:SPE10Mii} provides the same results as Table \ref{tab:chaobaiMii} for Test 2b. In this case, the condition
$\alpha<\alpha_K$ is never met for time-step sizes with a relevant physical meaning. Therefore, we report only the
performance obtained by the original RPF and the ERPF2 methods, being ERPF1 more convenient for small $\Delta t$
only. As in Test 2a, ERPF2 always outperforms the original RPF preconditioner, proving much more robust and practically 
time-step independent.

\section{Conclusions}\label{sec:conc}
The Relaxed Physical Factorization introduced in \cite{FriCasFer19} has proved an efficient preconditioning strategy
for the three-field formulation of coupled poromechanics with respect to domain heterogeneity, anisotropy and grid
distortion. However, a performance degradation was observed 
for the values of time-step size that are typically required at the beginning of a full transient simulation and
toward steady-state conditions. In fact, at the beginning of the poromechanical process very small time steps are
necessary to capture accurately the pressure and deformation evolution in almost undrained conditions, while larger
and larger steps are convenient when proceeding towards the steady state. The origin of such a drawback stems from
the need of inverting, also inexactly, inner blocks in the form $\hat{C}=C+\beta F F^T$ with $\beta \gg 1$, i.e., 
where the rank-deficient term $FF^T$ prevails on the regular matrix $C$.

Two fully algebraic methods are presented to address the RPF issues: 
\begin{enumerate}
	\item a natural splitting of $\hat{C}$ is introduced to define a particular stationary scheme. The
		scheme is unconditionally convergent and does not require the inversion of $\hat{C}$. Hence, the inner solve
		with $\hat{C}$ is replaced by a few iterations of such a scheme;
	\item an appropriate projection operator is developed in order to annihilate the components of the
		solution vector of $\hat{C}\mybold{w}=\vec{b}$ lying in the near-null space of $\hat{C}$. The projected
		system is solved in a stable way, while the remaining part of the solution vector can be computed
		requiring the inversion of the regular SPD matric $C$ only, instead of $\hat{C}$.
\end{enumerate}
{The proposed methods can be generalized to applications where a similar algebraic issue arises, such as  in the use of an augmented Lagrangian approach for Navier-Stokes or incompressible elasticity. In the context of coupled poromechanics, they}
give rise to an Enhanced RPF preconditioner, ERPF1 and ERPF2, respectively, which has been
tested in both theoretical benchmarks and challenging real-world large-size applications. The following results are 
worth summarizing.
\begin{itemize}
	\item Both methods are effective in improving the performance and robustness of the original RPF algorithm
		in the most critical situations, i.e., $\Delta t \rightarrow 0$ and $\Delta t \rightarrow \infty$,
		stabilizing the iteration counts to convergence independently of the time step size.
	\item ERPF1 is usually more efficient for small time step values, while ERPF2 outperforms, also by a large
		amount, RPF and ERPF1 for large time step values. Therefore, the most convenient approach in a
		full transient poromechanical application appears to be switching from ERPF1 when $\alpha<\alpha_K$
		at the simulation beginning, to RPF when $\alpha>\alpha_K$ and $\alpha>\alpha_A$ with intermediate
		steps, to ERPF2 when $\alpha<\alpha_A$ approaching steady state conditions.
	\item The use of nested direct solvers ensures a scalable behavior of ERPF2 with respect to both the mesh
		and time spacings. Such a property is generally lost by using inexact inner solves, for instance by
		incomplete factorizations, but can be restored with scalable inner preconditioners, such as
		algebraic multigrid methods.
\end{itemize}

\section*{Acknowledgments}
Partial funding was provided by TotalEnergies S.A. through the FC-MAELSTROM Project.
Portions of this work were performed within the 2020 INdAM-GNCS project ``Optimization and advanced linear algebra for PDE-governed problems''.  Portions of this work were performed under the auspices of the U.S. Department of Energy by Lawrence Livermore National Laboratory under Contract DE-AC52-07NA27344.
The authors are grateful to two anonymous reviewers, whose comments significantly helped improve the presentation.

\appendix

\section{Alternative ERPF2 application}\label{app:altERPF2}
With the ERPF2 approach, the inner solution to $\hat{C}\mybold{w}=\vec{b}$ is obtained through equation \eqref{eq:precSMW}
whenever $\beta>\beta_\ell$. With specific reference to Algorithm \ref{alg:appRPF}, this can happen twice at each
preconditioner application either:
\begin{enumerate}
	\item for $\alpha<\alpha_K$, having set $C=K$, $F=Q$, and $\beta=\alpha^{-1}$, or 
	\item for $\alpha<\alpha_A$, having set $C=A$, $F=B$, and $\beta=\gamma/\alpha$.
\end{enumerate}
For the sake of brevity, let us consider the case no.~2, which typically occurs more often in practical applications,
e.g., for large values of the time-step size. Similar considerations hold for the case no.~1.

\begin{algorithm}[t]
  \caption{Alternative ERPF2 application: $[\mybold{t}_u^T,\mybold{t}_q^T,\mybold{t}_p^T]=\mbox{\sc AltERPF2\_Apply}(\mybold{r}_u,\mybold{r}_q,\mybold{r}_p,\blkMat{A},\blkMat{M})$}
  \label{alg:appRPFmet2}
  \begin{algorithmic}[1]
    \If{$\alpha > \alpha_K$}
      \State $\mybold{x}_u=\alpha^{-1} Q \mybold{r}_p$
      \State $\mybold{x}_u \gets \mybold{r}_u + \mybold{x}_u$
      \State \textbf{Apply} $M^{-1}_{\hat{K}}$ to $\mybold{x}_u$ to get $\mybold{t}_u$   
      \State $\mybold{y}_p=Q^T \mybold{t}_u$
      \State  $\mybold{y}_p \gets \mybold{r}_p - \mybold{y}_p$
    \Else
      \State \textbf{Apply} $M^{-1}_{K}$ to $\mybold{r}_u$ to get $\mybold{x}_u$ 
      \State $\mybold{x}_p=Q^T \mybold{x}_u$
      \State  $\mybold{x}_p \gets \mybold{r}_p - \mybold{x}_p$
      \State \textbf{Apply} $M^{-1}_{\tilde{S}_K}$ to $\mybold{x}_p$ to get $\mybold{y}_p$ 
      \State  $\mybold{x}_u=Q \mybold{y}_p$
      \State  $\mybold{x}_u \gets \alpha \mybold{r}_u + \mybold{x}_u$
      \State \textbf{Apply} $M^{-1}_{K}$ to $\mybold{x}_u$ to get $\mybold{t}_u$ 
    \EndIf
    \If{$\alpha > \alpha_A$}
      \State $\mybold{z}_q=B \mybold{y}_p$
      \State $\mybold{z}_q \gets \mybold{r}_q + \alpha^{-1} \mybold{z}_q$      
      \State \textbf{Apply} $M^{-1}_{\hat{A}}$ to $\mybold{z}_q$ to get $\mybold{t}_q$   
      \State $\mybold{t}_p=B^T \mybold{t}_q$
      \State $\mybold{t}_p \gets \alpha^{-1} \left( \mybold{y}_p - \gamma \mybold{t}_p \right)$     
    \Else
      \State \textbf{Apply} $M^{-1}_{A}$ to $\mybold{r}_q$ to get $\mybold{z}_q$ 
      \State $\mybold{z}_p=B^T \mybold{z}_q$
      \State  $\mybold{y}_p \gets \alpha^{-1}\left ( \mybold{y}_p -\gamma \mybold{z}_p \right )$
      \State \textbf{Apply} $M^{-1}_{\tilde{S}_A}$ to $\mybold{y}_p$ to get $\mybold{t}_p$ 
      \State  $\mybold{z}_q=B \mybold{t}_p$
      \State  $\mybold{z}_q \gets \mybold{r}_q + \mybold{z}_q$
      \State \textbf{Apply} $M^{-1}_{A}$ to $\mybold{z}_q$ to get $\mybold{t}_q$ 
    \EndIf
  \end{algorithmic}
\end{algorithm}

Consider Algorithm \ref{alg:appRPF}. Recalling that $\mybold{z}_q= \mybold{r}_q + \alpha^{-1} B \mybold{y}_p$ (row 7), 
the application of $\hat{A}^{-1}$ on $\mybold{z}_q$ to get $ \mybold{t}_q$ using equation \eqref{eq:precSMW} yields:
\begin{eqnarray}
	\mybold{t}_q &=&\hat{A}^{-1} \mybold{z}_q \nonumber \\
	&=& A^{-1} \mybold{r}_q + \alpha^{-1} A^{-1} B \mybold{y}_p - \beta A^{-1} B \tilde{S}_A^{-1} B^T A^{-1} 
	\mybold{r}_q - \beta \alpha^{-1} A^{-1} B \tilde{S}_A^{-1} S_A \mybold{y}_p \nonumber \\
	&=& A^{-1} \mybold{r}_q + \alpha^{-1} A^{-1} B \tilde{S}_A^{-1} \left(  \tilde{S}_A - \beta S_A \right) 
	\mybold{y}_p -\beta A^{-1} B \tilde{S}_A^{-1} B^T A^{-1} \mybold{r}_q \nonumber \\
	&=& A^{-1} \mybold{r}_q + \alpha^{-1} A^{-1} B \tilde{S}_A^{-1} \mybold{y}_p - \beta A^{-1} B \tilde{S}_A^{-1} 
	B^T A^{-1} \mybold{r}_q \nonumber \\
	&=& A^{-1} \mybold{r}_q + A^{-1} B \tilde{S}_A^{-1} \left( \alpha^{-1} \mybold{y}_p - \beta B^T A^{-1} 
	\mybold{r}_q \right ) \nonumber \\
	&=& A^{-1} \left[ \mybold{r}_q + B \tilde{S}_A^{-1} \left( \alpha^{-1} \mybold{y}_p - \beta B^T A^{-1} \mybold{r}_q
	\right) \right]. \label{eq:equivalentq} 
\end{eqnarray}
Recalling that $\beta=\gamma/\alpha$, lines 9 and 10 of Algorithm \ref{alg:appRPF} can be rearranged as follows:
\begin{eqnarray}
	\mybold{t}_p &=& \alpha^{-1} \mybold{y}_p - \beta B^T \mybold{t}_q \nonumber \\
	&=& \alpha^{-1} \mybold{y}_p - \beta B^T A^{-1} \mybold{r}_q - \beta B^T A^{-1} B \tilde{S}_A^{-1} \left( 
	\alpha^{-1} \mybold{y}_p - \beta B^T A^{-1} \mybold{r}_q  \right) \nonumber \\ 
	&=& \left( I_p - \beta S_A \tilde{S}_A^{-1} \right) \left( \alpha^{-1} \mybold{y}_p - \beta B^T A^{-1} 
	\mybold{r}_q \right) \nonumber \\
	&=& \tilde{S}_A^{-1} \left( \alpha^{-1} \mybold{y}_p - \beta B^T A^{-1} \mybold{r}_q \right). \label{eq:equivalentp}
\end{eqnarray}
The vectors $\mybold{t}_p$ and $\mybold{t}_q$ are therefore computed by:
\begin{eqnarray}
	\mybold{t}_p &=& \tilde{S}_A^{-1} \left( \alpha^{-1} \mybold{y}_p - \beta B^T A^{-1} \mybold{r}_q  \right), 
	\label{eq:equivalentFact1} \\
	\mybold{t}_q &=& A^{-1} \left( \mybold{r}_q + B \mybold{t}_p \right) \label{eq:equivalentFact2}.
\end{eqnarray}
Equations \eqref{eq:equivalentFact1} and \eqref{eq:equivalentFact2} shows that Method 2 used for the inner solve with
$\hat{A}$ is equivalent to apply the inverse of the following block factorization for $\blkMat{M}_2$:
 \begin{equation}\label{eq:RPF_fact}
  \begin{bmatrix} I_u & 0 & 0\\ 0 & A & -B\\ 
  0 & \gamma B^T & \alpha I_p  \\ \end{bmatrix} =
    \begin{bmatrix}  I_u & 0 & 0\\ 0 & A & 0\\ 
  0 & \gamma B^T & \tilde{S}_A \\ \end{bmatrix}  
  \begin{bmatrix}  I_u & 0 & 0\\ 0 & I_q & -A^{-1} B\\ 
  0 & 0 & \alpha I_p \end{bmatrix}.
\end{equation} 
In fact, solving the block system $\blkMat{M}_2\vec{t}=\vec{y}$ with the aid of \eqref{eq:RPF_fact} yields:
\begin{equation} \label{eq:M2app}
	\left( \begin{array}{c}
		\vec{t}_u \\ \vec{t}_q \\ \vec{t}_p
	\end{array} \right) =
	\left[ \begin{array}{ccc}
		I_u & 0 & 0 \\ 0 & I_q & \alpha^{-1} A^{-1} B \\ 0 & 0 & \alpha^{-1} I_p 
	\end{array} \right] \left[ \begin{array}{ccc}
		I_u & 0 & 0 \\ 0 & A^{-1} & 0 \\ 0 & -\gamma \tilde{S}_A^{-1} B^T A^{-1} & \tilde{S}_A^{-1} 
	\end{array} \right] \left( \begin{array}{c} 
		\vec{y}_u \\ \vec{y}_q \\ \vec{y}_p 
	\end{array} \right), 
\end{equation}
which provides equations \eqref{eq:equivalentFact1}-\eqref{eq:equivalentFact2} for $\vec{y}_q=\vec{r}_q$.

With similar arguments, after some algebra it can be easily proved that Method 2 for the inner solve with $\hat{K}$
is equivalent to apply the inverse of the following factorization of $\blkMat{M}_1$:
\begin{equation}\label{eq:M1_fact}
  \begin{bmatrix} K & 0 & -Q\\ 0 & \alpha I_q & 0\\ Q^T & 0 & \alpha I_p  \\ \end{bmatrix} =
    \begin{bmatrix}  K & 0 & 0\\ 0 & \alpha I_q & 0\\ Q^T & 0 & \tilde{S}_K \\ \end{bmatrix}  
	    \begin{bmatrix}  I_u & 0 & -K^{-1}Q \\ 0 & I_q & 0\\ 0 & 0 & \alpha I_p \end{bmatrix}.
\end{equation}
The solution to the system $\alpha \blkMat{M}_1 \vec{y} = \vec{r}$ using \eqref{eq:M1_fact} reads:
\begin{equation} \label{eq:aM1app}
        \left( \begin{array}{c}
                \vec{y}_u \\ \vec{y}_q \\ \vec{y}_p
        \end{array} \right) = \alpha
        \left[ \begin{array}{ccc}
		I_u & 0 & \alpha^{-1} K^{-1} Q \\ 0 & I_q & 0 \\ 0 & 0 & \alpha^{-1} I_p 
        \end{array} \right] \left[ \begin{array}{ccc}
		K^{-1} & 0 & 0 \\ 0 & \alpha^{-1} I_q & 0 \\ -\tilde{S}_K^{-1} Q^T K^{-1} & 0 & \tilde{S}_K^{-1} 
        \end{array} \right] \left( \begin{array}{c} 
                \vec{r}_u \\ \vec{r}_q \\ \vec{r}_p 
        \end{array} \right), 
\end{equation}
which provides:
\begin{eqnarray}
	\vec{y}_p & = & \tilde{S}_K^{-1} \left( \vec{r}_p - Q^T K^{-1} \vec{r}_u \right), \label{eq:yp} \\
	\vec{y}_u & = & K^{-1} \left( \alpha \vec{r}_u + Q \vec{y}_p \right), \label{eq:yu}
\end{eqnarray}
with $\vec{y}_q=\vec{r}_q$.

As already noticed in Section \eqref{subsec:met2}, the inner solves with $K$, $A$, $\tilde{S}_K$, and $\tilde{S}_A$ 
can be conveniently replaced by inexact solves with the aid of inner preconditioners, say $M_K^{-1}$, $M_A^{-1}$,
$M_{\tilde{S}_K}^{-1}$, and $M_{\tilde{S}_A}^{-1}$, respectively. The equivalent ERPF2 application obtained by
using the factorizations above is finally summarized in Algorithm \ref{alg:appRPFmet2}.


\begin{thebibliography}{10}
\expandafter\ifx\csname url\endcsname\relax
  \def\url#1{\texttt{#1}}\fi
\expandafter\ifx\csname urlprefix\endcsname\relax\def\urlprefix{URL }\fi
\expandafter\ifx\csname href\endcsname\relax
  \def\href#1#2{#2} \def\path#1{#1}\fi

\bibitem{Bio41}
M.~A. Biot, {General theory of three-dimensional consolidation}, J. Appl. Phys.
  12 (1941) 155--164.
\newblock \href {https://doi.org/10.1063/1.1712886}
  {\path{doi:10.1063/1.1712886}}.

\bibitem{Cou04}
O.~Coussy, {Poromechanics}, Wiley, Chichester, UK, 2004.
\newblock \href {https://doi.org/10.1002/0470092718}
  {\path{doi:10.1002/0470092718}}.

\bibitem{Fri00}
A.~J.~H. Frijns, {A four-component mixture theory applied to cartilaginous
  tissues: numerical modelling and experiments}, {PhD thesis}, Technische
  Universiteit Eindhoven, The Netherlands (2000).
\newblock \href {https://doi.org/10.6100/IR537990}
  {\path{doi:10.6100/IR537990}}.

\bibitem{PhiWhe07a}
P.~J. Phillips, M.~F. Wheeler, {A coupling of mixed and continuous Galerkin
  finite element methods for poroelasticity I: the continuous in time case},
  Comput. Geosci. 11~(2) (2007a) 131--144.
\newblock \href {https://doi.org/10.1007/s10596-007-9045-y}
  {\path{doi:10.1007/s10596-007-9045-y}}.

\bibitem{PhiWhe07b}
P.~J. Phillips, M.~F. Wheeler, {A coupling of mixed and continuous Galerkin
  finite element methods for poroelasticity II: the discrete-in-time case},
  Comput. Geosci. 11~(2) (2007b) 145--158.
\newblock \href {https://doi.org/10.1007/s10596-007-9044-z}
  {\path{doi:10.1007/s10596-007-9044-z}}.

\bibitem{BauRadKoc17}
M.~Bause, F.~A. Radu, U.~K\"{o}cher, {Space–time finite element approximation
  of the Biot poroelasticity system with iterative coupling}, Comput. Meth.
  Appl. Mech. Eng. 320 (2017) 745--768.
\newblock \href {https://doi.org/10.1016/j.cma.2017.03.017}
  {\path{doi:10.1016/j.cma.2017.03.017}}.

\bibitem{Hu_etal17}
X.~Hu, C.~Rodrigo, F.~J. Gaspar, L.~T. Zikatanov, {A nonconforming finite
  element method for the Biot's consolidation model in poroelasticity}, J.
  Comput. Appl. Math. 310 (2017) 143--154.
\newblock \href {https://doi.org/10.1016/j.cam.2016.06.003}
  {\path{doi:10.1016/j.cam.2016.06.003}}.

\bibitem{Bot_etal18}
J.~W. Both, K.~Kumar, J.~M. Nordbotten, F.~A. Radu, {Anderson accelerated
  fixed-stress splitting schemes for consolidation of unsaturated porous
  media}, Comput. Math. Appl. 77~(6) (2019) 1479--1502.
\newblock \href {https://doi.org/10.1016/j.camwa.2018.07.033}
  {\path{doi:10.1016/j.camwa.2018.07.033}}.

\bibitem{GirKumWhe16}
V.~Girault, K.~Kumar, M.~F. Wheeler, {Convergence of iterative coupling of
  geomechanics with flow in a fractured poroelastic medium}, Comput. Geosci.
  20~(5) (2016) 997--1011.
\newblock \href {https://doi.org/10.1007/s10596-016-9573-4}
  {\path{doi:10.1007/s10596-016-9573-4}}.

\bibitem{GirWheAlmDan19}
{Girault, Vivette}, {Wheeler, Mary F.}, {Almani, Tameem}, {Dana, Saumik}, A
  priori error estimates for a discretized poro-elastic-elastic system solved
  by a fixed-stress algorithm, Oil Gas Sci. Technol. - Rev. IFP Energies
  nouvelles 74 (2019) 24.
\newblock \href {https://doi.org/10.2516/ogst/2018071}
  {\path{doi:10.2516/ogst/2018071}}.

\bibitem{FerCasGam10}
M.~Ferronato, N.~Castelletto, G.~Gambolati, {A fully coupled 3-D mixed finite
  element model of Biot consolidation}, J. Comput. Phys. 229~(12) (2010)
  4813--4830.
\newblock \href {https://doi.org/10.1016/j.jcp.2010.03.018}
  {\path{doi:10.1016/j.jcp.2010.03.018}}.

\bibitem{CasWhiFer16}
N.~Castelletto, J.~A. White, M.~Ferronato, {Scalable algorithms for three-field
  mixed finite element coupled poromechanics}, J. Comput. Phys. 327 (2016)
  894--918.
\newblock \href {https://doi.org/10.1016/j.jcp.2016.09.063}
  {\path{doi:10.1016/j.jcp.2016.09.063}}.

\bibitem{LewSch98}
R.~W. Lewis, B.~A. Schrefler, {The Finite Element Method in the Static and
  Dynamic Deformation and Consolidation of Porous Media}, 2nd Edition, Wiley,
  Chichester, UK, 1998.

\bibitem{CasFerGam12}
N.~Castelletto, M.~Ferronato, G.~Gambolati, {Thermo-hydro-mechanical modeling
  of fluid geological storage by Godunov-mixed methods}, Int. J. Numer. Meth.
  Eng. 90~(8) (2012) 988--1009.
\newblock \href {https://doi.org/10.1002/nme.3352}
  {\path{doi:10.1002/nme.3352}}.

\bibitem{JhaJua14}
B.~Jha, R.~Juanes, {Coupled multiphase flow and poromechanics: A computational
  model of pore pressure effects on fault slip and earthquake triggering},
  Water Resour. Res. 5 (2014) 3776--3808.
\newblock \href {https://doi.org/10.1002/2013WR015175}
  {\path{doi:10.1002/2013WR015175}}.

\bibitem{ChoLee18}
J.~Choo, S.~Lee, Enriched galerkin finite elements for coupled poromechanics
  with local mass conservation, Computer Methods in Applied Mechanics and
  Engineering 341 (2018) 311 -- 332.
\newblock \href {https://doi.org/https://doi.org/10.1016/j.cma.2018.06.022}
  {\path{doi:https://doi.org/10.1016/j.cma.2018.06.022}}.

\bibitem{Cho18}
J.~Choo, Large deformation poromechanics with local mass conservation: An
  enriched galerkin finite element framework, International Journal for
  Numerical Methods in Engineering 116~(1) (2018) 66--90.
\newblock \href {https://doi.org/10.1002/nme.5915}
  {\path{doi:10.1002/nme.5915}}.

\bibitem{Lip02}
K.~Lipnikov, {Numerical Methods for the Biot Model in Poroelasticity}, {PhD
  thesis}, University of Houston (2002).

\bibitem{Rod_etal18}
C.~Rodrigo, X.~Hu, P.~Ohm, J.~H. Adler, F.~J. Gaspar, L.~T. Zikatanov, {New
  stabilized discretizations for poroelasticity and the Stokes' equations},
  Comput. Meth. Appl. Mech. Eng. {341} ({2018}) {467--484}.
\newblock \href {https://doi.org/10.1016/j.cma.2018.07.003}
  {\path{doi:10.1016/j.cma.2018.07.003}}.

\bibitem{NiuRuiHu19}
C.~Niu, H.~Rui, X.~Hu, A stabilized hybrid mixed finite element method for
  poroelasticity, Comput. Geosci. 25 (2021) 757--774.
\newblock \href {https://doi.org/10.1007/s10596-020-09972-3}
  {\path{doi:10.1007/s10596-020-09972-3}}.

\bibitem{HonMalFerJan18}
H.~Honorio, C.~Maliska, M.~Ferronato, C.~Janna, {A stabilized element-based
  finite volume method for poroelastic problems}, J. Comput. Phys. 364 (2018)
  49--72.
\newblock \href {https://doi.org/10.1016/j.jcp.2018.03.010}
  {\path{doi:10.1016/j.jcp.2018.03.010}}.

\bibitem{CamWhiBor19}
J.~T. Camargo, J.~A. White, R.~I. Borja, A macroelement stabilization for
  multiphase poromechanics, Comput. Geosci. 25 (2021) 775--792.
\newblock \href {https://doi.org/10.1007/s10596-020-09964-3}
  {\path{doi:10.1007/s10596-020-09964-3}}.

\bibitem{FriCasFerWhi20}
M.~Frigo, N.~Castelletto, M.~Ferronato, J.~A. White, Efficient solvers for
  hybridized three-field mixed finite element coupled poromechanics, Computers
  \& Mathematics with Applications 91 (2021) 36--52.
\newblock \href {https://doi.org/https://doi.org/10.1016/j.camwa.2020.07.010}
  {\path{doi:https://doi.org/10.1016/j.camwa.2020.07.010}}.

\bibitem{Vor92}
H.~A. Van~der Vorst, {Bi-CGSTAB: A fast and smoothly converging variant of
  Bi-CG for the solution of nonsymmetric linear systems}, SIAM J. Sci. Stat.
  Comp. 13~(2) (1992) 631--644.
\newblock \href {https://doi.org/10.1137/0913035} {\path{doi:10.1137/0913035}}.

\bibitem{SaaSch86}
Y.~Saad, M.~H. Schultz, {GMRES: A Generalized Minimal Residual Algorithm for
  Solving Nonsymmetric Linear Systems}, SIAM J. Sci. Stat. Comput. 7~(3) (1986)
  856--869.
\newblock \href {https://doi.org/10.1137/0907058} {\path{doi:10.1137/0907058}}.

\bibitem{JhaJua07}
B.~Jha, R.~Juanes, {A locally conservative finite element framework for the
  simulation of coupled flow and reservoir geomechanics}, Acta Geotech. 2~(3)
  (2007) 139--153.
\newblock \href {https://doi.org/10.1007/s11440-007-0033-0}
  {\path{doi:10.1007/s11440-007-0033-0}}.

\bibitem{Alm_etal16}
T.~Almani, K.~Kumar, A.~Dogru, G.~Singh, M.~Wheeler, {Convergence analysis of
  multirate fixed-stress split interative schemes for coupling flow with
  geomechanics}, Comput. Methods Appl. Mech. Eng. 311 (2016) 180--207.
\newblock \href {https://doi.org/10.1016/j.cma.2016.07.036}
  {\path{doi:10.1016/j.cma.2016.07.036}}.

\bibitem{Bot_etal17}
J.~W. Both, M.~Borregales, J.~M. Nordbotten, K.~Kumar, F.~A. Radu, {Robust
  fixed stress splitting for Biot's equations in heterogeneous media}, Appl.
  Math. Lett. 68 (2017) 101--108.
\newblock \href {https://doi.org/10.1016/j.aml.2016.12.019}
  {\path{doi:10.1016/j.aml.2016.12.019}}.

\bibitem{Bor_etal18}
M.~Borregales, F.~A. Radu, K.~Kumar, J.~M. Nordbotten, {Robust iterative
  schemes for non-linear poromechanics}, Comput. Geosci. 22~(4) (2018)
  1021--1038.
\newblock \href {https://doi.org/10.1007/s10596-018-9736-6}
  {\path{doi:10.1007/s10596-018-9736-6}}.

\bibitem{DanGanWhe18}
S.~Dana, B.~Ganis, M.~F. Wheeler, {A multiscale fixed stress split iterative
  scheme for coupled flow and poromechanics in deep subsurface reservoirs}, J.
  Comput. Phys. 352 (2018) 1--22.
\newblock \href {https://doi.org/10.1016/j.jcp.2017.09.049}
  {\path{doi:10.1016/j.jcp.2017.09.049}}.

\bibitem{DanWhe18}
S.~Dana, M.~F. Wheeler, {Convergence analysis of two-grid fixed stress split
  iterative scheme for coupled flow and deformation in heterogeneous
  poroelastic media}, Comput. Methods Appl. Mech. Eng. 341 (2018) 788--806.
\newblock \href {https://doi.org/10.1016/j.cma.2018.07.018}
  {\path{doi:10.1016/j.cma.2018.07.018}}.

\bibitem{Hon_etal18}
Q.~Hong, J.~Kraus, M.~Lymbery, M.~F. Wheeler, {Parameter-robust convergence
  analysis of fixed-stress split iterative method for multiple-permeability
  poroelasticity systems}, Multiscale Model. Sim. 18~(2) (2020) 916--941.
\newblock \href {https://doi.org/10.1137/19M1253988}
  {\path{doi:10.1137/19M1253988}}.

\bibitem{Kuz_etal03}
Y.~Kuznetsov, K.~Lipnikov, S.~Lyons, S.~Maliassov, {Mathematical modeling and
  numerical algorithms for poroelastic problems}, in: {Chen, Z. and Glowinski,
  R. and Li, K.} (Ed.), {Current Trends in Scientific Computing}, Vol. {329} of
  {Contemporary Mathematics}, {Amer. Mathematical Soc.}, {P.O. Box 6248,
  Providence, RI 02940 USA}, {2003}, pp. {191--202}.
\newblock \href {https://doi.org/10.1090/conm/329}
  {\path{doi:10.1090/conm/329}}.

\bibitem{WhiBor11}
J.~A. White, R.~Borja, {Block-preconditioned Newton--Krylov solvers for fully
  coupled flow and geomechanics}, Comput. Geosci. 15~(4) (2011) 647--659.
\newblock \href {https://doi.org/10.1007/s10596-011-9233-7}
  {\path{doi:10.1007/s10596-011-9233-7}}.

\bibitem{HagOsnLan12a}
J.~B. Haga, H.~Osnes, H.~P. Langtangen, {A parallel block preconditioner for
  large-scale poroelasticity with highly heterogeneous material parameters},
  Comput. Geosci. {16}~({3}) ({2012}) {723--734}.
\newblock \href {https://doi.org/10.1007/s10596-012-9284-4}
  {\path{doi:10.1007/s10596-012-9284-4}}.

\bibitem{HagOsnLan12b}
J.~B. Haga, H.~Osnes, H.~P. Langtangen, {On the causes of pressure oscillations
  in low-permeable and low-compressible porous media}, Int. J. Numer. Anal.
  Methods Geomech. 36~(12) (2012) 1507--1522.
\newblock \href {https://doi.org/10.1002/nag.1062}
  {\path{doi:10.1002/nag.1062}}.

\bibitem{AxeBlaByc12}
O.~Axelsson, R.~Blaheta, P.~Byczanski, {Stable discretization of poroelasticity
  problems and efficient preconditioners for arising saddle point type
  matrices}, Comput. Vis. Sci. 15~(4) (2012) 191--207.
\newblock \href {https://doi.org/10.1007/s00791-013-0209-0}
  {\path{doi:10.1007/s00791-013-0209-0}}.

\bibitem{TurArb14}
E.~Turan, P.~Arbenz, {Large scale micro finite element analysis of 3D bone
  poroelasticity}, Parallel Comput. 40~(7) (2014) 239--250.
\newblock \href {https://doi.org/10.1016/j.parco.2013.09.002}
  {\path{doi:10.1016/j.parco.2013.09.002}}.

\bibitem{GasRod17}
F.~J. Gaspar, C.~Rodrigo, {On the fixed-stress split scheme as smoother in
  multigrid methods for coupling flow and geomechanics}, Comput. Meth. Appl.
  Mech. Eng. 326 (2017) 526--540.
\newblock \href {https://doi.org/10.1016/j.cma.2017.08.025}
  {\path{doi:10.1016/j.cma.2017.08.025}}.

\bibitem{Luo_etal17}
P.~Luo, C.~Rodrigo, F.~J. Gaspar, C.~W. Oosterlee, {On an Uzawa smoother in
  multigrid for poroelasticity equations}, {Numer. Linear Algebr. Appl.}
  {24}~({1}) ({2017}) e2074.
\newblock \href {https://doi.org/{10.1002/nla.2074}}
  {\path{doi:{10.1002/nla.2074}}}.

\bibitem{LeeMarWin17}
J.~J. Lee, K.-A. Mardal, R.~Winther, {Parameter-Robust Discretization and
  Preconditioning of Biot's Consolidation Model}, SIAM J. Sci. Comput. 39~(1)
  (2010) A1--A24.
\newblock \href {https://doi.org/10.1137/15M1029473}
  {\path{doi:10.1137/15M1029473}}.

\bibitem{Adl_etal18}
F.~J. Adler, J. H.~Gaspar, X.~Hu, C.~Rodrigo, L.~T. Zikatanov, {Robust Block
  Preconditioners for Biot's Model}, in: P.~E. Bj{\o}stad, S.~Brenner,
  L.~Halpern, R.~Kornhuber, H.~H. Kim, T.~Rahman, O.~B. Widlund (Eds.), Domain
  Decomposition Methods in Science and Engineering XXIV, Springer International
  Publishing, 2018, pp. 3--16.
\newblock \href {https://doi.org/10.1007/978-3-319-93873-8}
  {\path{doi:10.1007/978-3-319-93873-8}}.

\bibitem{HonKra18}
Q.~Hong, J.~Kraus, {Parameter-robust stability of classical three-field
  formulation of Biot's consolidation model}, Electron. Trans. Numer. Anal. 48
  (2018) 202--226.
\newblock \href {https://doi.org/10.1553/etna_vol48s202}
  {\path{doi:10.1553/etna_vol48s202}}.

\bibitem{FerFraJanCasTch19}
M.~Ferronato, A.~Franceschini, C.~Janna, N.~Castelletto, H.~Tchelepi, {A
  general preconditioning framework for coupled multi-physics problems}, J.
  Comput. Phys. 398 (2019) 108887.
\newblock \href {https://doi.org/https://doi.org/10.1016/j.jcp.2019.108887}
  {\path{doi:https://doi.org/10.1016/j.jcp.2019.108887}}.

\bibitem{Adl_etal20}
J.~H. Adler, F.~J. Gaspar, X.~Hu, P.~Ohm, C.~Rodrigo, L.~T. Zikatanov, Robust
  preconditioners for a new stabilized discretization of the poroelastic
  equations, SIAM J. Sci. Comput. 42~(3) (2020) B761--B791.
\newblock \href {https://doi.org/10.1137/19M1261250}
  {\path{doi:10.1137/19M1261250}}.

\bibitem{Bui_etal20}
Q.~M. Bui, D.~Osei-Kuffuor, N.~Castelletto, J.~A. White, A scalable multigrid
  reduction framework for multiphase poromechanics of heterogeneous media, SIAM
  Journal on Scientific Computing 42~(2) (2020) B379--B396.
\newblock \href {https://doi.org/10.1137/19M1256117}
  {\path{doi:10.1137/19M1256117}}.

\bibitem{FriCasFer19}
M.~Frigo, N.~Castelletto, M.~Ferronato, {A Relaxed Physical Factorization
  Preconditioner for mixed finite element coupled poromechanics }, SIAM J. Sci.
  Comput. 41 (2019) B694--B720.
\newblock \href {https://doi.org/10.1137/18M120645X}
  {\path{doi:10.1137/18M120645X}}.

\bibitem{BenNgNiuZhe11}
M.~Benzi, N.~M., Q.~Niu, Z.~Wang, {A {R}elaxed {D}imensional {F}actorization
  preconditioner for the incompressible {N}avier-{S}tokes equations}, J.
  Comput. Phys. 230~(16) (2011) 6185--6202.
\newblock \href {https://doi.org/10.1016/j.jcp.2011.04.001}
  {\path{doi:10.1016/j.jcp.2011.04.001}}.

\bibitem{BenDepGraQua16}
M.~Benzi, S.~Deparis, G.~Grandperrin, A.~Quarteroni, {Parameter estimates for
  the {R}elaxed {D}imensional {F}actorization preconditioner and application to
  hemodynamics}, Comput. Meth. Appl. Mech. Eng. 300~(Supplement C) (2016)
  129--145.
\newblock \href {https://doi.org/10.1016/j.cma.2015.11.016}
  {\path{doi:10.1016/j.cma.2015.11.016}}.

\bibitem{BriSte96}
U.~Brink, Stein, On some mixed finite element methods for incompressible and
  nearly incompressible finite elasticity, Computational Mechanics 19 (1996)
  1432--0924.
\newblock \href {https://doi.org/10.1007/BF02824849}
  {\path{doi:10.1007/BF02824849}}.

\bibitem{OlsBen07}
M.~Olshanskii, M.~Benzi, {An augmented Lagrangian approach to linearized
  problems in hydrodynamic stability}, SIAM J. Sci. Comp. 30 (2007) 1459--1473.
\newblock \href {https://doi.org/10.1137/070691851}
  {\path{doi:10.1137/070691851}}.

\bibitem{BenOls11}
M.~Benzi, M.~Olshanskii, {Field-of-values convergence analysis of Augmented
  Lagrangian preconditioners for the linearize Navier-Stokes problem}, SIAM J.
  Numer. Anal. 49 (2011) 770--788.
\newblock \href {https://doi.org/10.1137/100806485}
  {\path{doi:10.1137/100806485}}.

\bibitem{BenOlsWan11}
M.~Benzi, M.~Olshanskii, Z.~Wang, {Modified augmented Lagrangian
  preconditioners for the incompressible Navier-Stokes equations}, Int. J.
  Numer. Meth. Fluids 66 (2011) 486--508.
\newblock \href {https://doi.org/10.1002/fld.2267}
  {\path{doi:10.1002/fld.2267}}.

\bibitem{FarMitWec19}
P.~E. Farrell, L.~Mitchell, F.~Wechsung, An augmented {L}agrangian
  preconditioner for the 3d stationary incompressible {N}avier-{S}tokes
  equations at high {R}eynolds number, SIAM J. Sci. Comput. 41~(5) (2019)
  A3073--A3096.
\newblock \href {https://doi.org/10.1137/18M1219370}
  {\path{doi:10.1137/18M1219370}}.

\bibitem{CasWhiTch15}
N.~Castelletto, J.~White, H.~Tchelepi, Accuracy and convergence properties of
  the fixed-stress iterative solution of two-way coupled poromechanics,
  International Journal for Numerical and Analytical Methods in Geomechanics 39
  (2015) 1593--1618.
\newblock \href {https://doi.org/10.1002/nag.2400}
  {\path{doi:10.1002/nag.2400}}.

\bibitem{WhiCasTch16}
J.~A. White, N.~Castelletto, H.~A. Tchelepi, {Block-partitioned solvers for
  coupled poromechanics: A unified framework}, Comput. Meth. Appl. Mech. Eng.
  303 (2016) 55--74.
\newblock \href {https://doi.org/10.1016/j.cma.2016.01.008}
  {\path{doi:10.1016/j.cma.2016.01.008}}.

\bibitem{BerManMan98}
L.~Bergamaschi, S.~Mantica, G.~Manzini, {A mixed finite element-finite volume
  formulation of the black oil model}, SIAM J. Sci. Comput. {20} ({1998})
  {970--997}.
\newblock \href {https://doi.org/10.1137/S1064827595289303}
  {\path{doi:10.1137/S1064827595289303}}.

\bibitem{Hac16}
W.~Hackbusch, Iterative Solution of Large Sparse Systems of Equations, 2nd
  Edition, Springer, Cham, Switzerland, 2016.
\newblock \href {https://doi.org/10.1007/978-3-319-28483-5}
  {\path{doi:10.1007/978-3-319-28483-5}}.

\bibitem{Man53}
J.~Mandel, {Consolidation des sols (\'{E}tude math\'{e}matique)}, Geotechnique
  3~(7) (1953) 287--299.
\newblock \href {https://doi.org/10.1680/geot.1953.3.7.287}
  {\path{doi:10.1680/geot.1953.3.7.287}}.

\bibitem{Saa94}
Y.~Saad, {ILUT: A dual threshold incomplete ILU factorization}, Numer. Lin.
  Alg. Appl. 1 (1994) 387--402.
\newblock \href {https://doi.org/10.1002/nla.1680010405}
  {\path{doi:10.1002/nla.1680010405}}.

\bibitem{LinMor99}
C.~Lin, J.~Mor\'{e}, {Incomplete Cholesky factorizations with limited memory},
  SIAM J. Sci. Comput. 21~(1) (1999) 24--45.
\newblock \href {https://doi.org/10.1137/S1064827597327334}
  {\path{doi:10.1137/S1064827597327334}}.

\bibitem{RugStu87}
J.~Q. Ruge, K.~St{\"{u}}ben, Algebraic Multigrid, Vol.~3 of Frontiers in
  Applied Mathematics, Society for Industrial and Applied Mathematics,
  Philadelphia, PA, USA, 1987, Ch.~4, pp. 73--130.
\newblock \href {https://doi.org/10.1137/1.9781611971057.ch4}
  {\path{doi:10.1137/1.9781611971057.ch4}}.

\bibitem{BoyMihSco10}
J.~Boyle, M.~D. Mihajlovic, J.~A. Scott, {HSL\_MI20: An efficient AMG
  preconditioner for finite element problems in 3D}, Int. J. Numer. Meth. Eng.
  82 (2010) 64--98.
\newblock \href {https://doi.org/10.1002/nme.2758}
  {\path{doi:10.1002/nme.2758}}.

\bibitem{FerGazCasTeaZhu17}
M.~Ferronato, L.~Gazzola, N.~Castelletto, P.~Teatini, L.~Zhu, A coupled {M}ixed
  {F}inite {E}lement {B}iot model for land subsidence prediction in the
  {B}eijing area, in: M.~Vandamme, P.~Dangla, J.~Pereira, S.~Ghabezloo (Eds.),
  Poromechanics {VI}, American Society of Civil Engineers, 2017, pp. 182--189.
\newblock \href {https://doi.org/10.1061/9780784480779.022}
  {\path{doi:10.1061/9780784480779.022}}.

\bibitem{ZhuDaiGonGabTea16}
L.~Zhu, Z.~Dai, H.~Gong, C.~Gable, P.~Teatini, Statistic inversion of
  multi-zone transition probability models for aquifer characterization in
  alluvial fans, Stoch. Environ. Res.Risk Assess. 30 (2016) 1005--1016.
\newblock \href {https://doi.org/10.1007/s00477-015-1089-2}
  {\path{doi:10.1007/s00477-015-1089-2}}.

\bibitem{ZhuFraGonFerDaiKeLiWanTea19}
L.~Zhu, A.~Franceschini, H.~Gong, M.~Ferronato, Z.~Dai, Y.~Ke, Y.~Pan, X.~Li,
  R.~Wang, P.~Teatini, The 3-d facies and geomechanical modeling of land
  subsidence in the chaobai plain, beijing, Water Resour. Res. 56~(3) (2020)
  e2019WR027026.
\newblock \href {https://doi.org/10.1029/2019WR027026}
  {\path{doi:10.1029/2019WR027026}}.

\bibitem{ChrBlu01}
M.~Christie, M.~Blunt, {Tenth SPE comparative solution project: A comparison of
  upscaling techniques}, SPE Reserv. Eval. Eng. 4~(4) (2001) 308--316.
\newblock \href {https://doi.org/10.2118/72469-PA}
  {\path{doi:10.2118/72469-PA}}.

\bibitem{CheHuaMa06}
Z.~Chen, G.~Huan, Y.~Ma, {Computational Methods for Multiphase Flows in
  Porous}, Society for Industrial and Applied Mathematics, Philadelphia, PA,
  USA, 2006.
\newblock \href {https://doi.org/10.1137/1.9780898718942}
  {\path{doi:10.1137/1.9780898718942}}.

\end{thebibliography}
\end{document}